\documentclass[11pt,english]{article}
\usepackage[T1]{fontenc}
\usepackage[latin9]{inputenc}
\setlength{\parskip}{0.25em}
\setlength{\parindent}{0pt}
\usepackage{amsmath}
\usepackage{amsthm}
\usepackage{amssymb}

\makeatletter

\newcommand{\noun}[1]{\textsc{#1}}

  \theoremstyle{plain}
  \newtheorem*{thm*}{\protect\theoremname}
  \theoremstyle{plain}
  \newtheorem*{cor*}{\protect\corollaryname}
\theoremstyle{plain}
\newtheorem{thm}{\protect\theoremname}
  \theoremstyle{definition}
  \newtheorem{example}[thm]{\protect\examplename}
  \theoremstyle{plain}
  \newtheorem{lem}[thm]{\protect\lemmaname}
  \theoremstyle{plain}
  \newtheorem{prop}[thm]{\protect\propositionname}
  \theoremstyle{plain}
  \newtheorem{cor}[thm]{\protect\corollaryname}
  \theoremstyle{plain}
  \newtheorem{conjecture}[thm]{\protect\conjecturename}
  \theoremstyle{definition}
  \newtheorem*{example*}{\protect\examplename}
\newcommand{\lyxaddress}[1]{
\par {\raggedright #1
\vspace{1.4em}
\noindent\par}
}


\usepackage{mathrsfs}\usepackage{amsthm}\usepackage{amscd}

\usepackage{hyperref}
\usepackage{graphics}
\usepackage{a4wide}
\@ifundefined{definecolor}
 {\usepackage{color}}{}

\addtolength{\textheight}{10mm} \addtolength{\topmargin}{-13mm}

\newcounter{EQNR}
\setcounter{EQNR}{0}

\makeatother

\usepackage{babel}
  \providecommand{\conjecturename}{Conjecture}
  \providecommand{\corollaryname}{Corollary}
  \providecommand{\examplename}{Example}
  \providecommand{\lemmaname}{Lemma}
  \providecommand{\propositionname}{Proposition}
  \providecommand{\theoremname}{Theorem}
\providecommand{\theoremname}{Theorem}

\begin{document}

\title{The stars at infinity in several complex variables}

\author{Anders Karlsson\footnote{The author was supported in part by the Swiss NSF grants 200020-200400 and 200021-212864, and the Swedish Research Council grant 104651320.}}

\date{May 28, 2023}
\maketitle
\begin{abstract}
This text reviews certain notions in metric geometry that may have
further applications to problems in complex geometry and holomorphic
dynamics in several variables. The discussion contains a few unrecorded
results and formulates a number of questions related to the asymptotic
geometry and boundary estimates of bounded complex domains, boundary
extensions of biholomorphisms, the dynamics of holomorphic self-maps,
Teichmüller theory, and the existence of constant scalar curvature
metrics on compact Kähler manifolds.
\end{abstract}

\section{From complex to metric}

Pick's striking reformulation of the Schwarz lemma in \cite{Pi16}
started a rich development of metric methods in complex analysis.
The Schwarz-Pick lemma states that every holomorphic self-map of the
unit disk $D$ does not increase distances in the Poincaré metric.
Modifying a definition of Carathéodory, Kobayashi defined in the 1960s
a largest pseudo-distance $k_{Z}$ on each complex space $Z$ so that
every holomorphic map is nonexpansive in these distances. A pseudo-distance
is a metric except that there may exist distinct points $x$ and $y$
such that $d(x,y)=0$. Indeed, $k_{\mathbb{C}}\equiv0$. On the other
hand, $k_{D}$ is the Poincaré metric and one sees that Liouville's
theorem that bounded entire functions are constant immediately follows
from these assertions. This is not a shorter proof of this theorem,
but places it into a different framework that also explains the Picard's
little theorem. Complex space where the Kobabyshi pseudo-distance
is an actual distance are called \emph{Kobayashi hyperbolic} and this
is via Lang's conjectures from the 1970s connected to finiteness of
the number of rational solutions to diophantine equations \cite{La74,La86}.

Metric methods are thus rather old in the subject of several complex
variables, but as seen for example in \cite{BG20,BGZ21,AFGG22,BNT22,Z22,GZ22,L22},
the metric perspective has developed strongly also in recent years.
The present paper tries to outline a few further possible directions
mostly to do with boundaries and intrinsic structures on them. One
classic topic in complex analysis is the question of the extension
of holomorphic maps to the boundary of the domain, a problem for which
also the Bergman metric has been used. In Riemannian geometry, boundary
maps were considered in the 1960s by Mostow for the proofs of his
landmark rigidity theorems. At an early stage his ideas did not attract
much interest from the Lie group community, instead encouragement
came from Ahlfors, the famous complex analyst \cite{Mo96}. Nowadays,
since the influence of Gromov having Mostow's and Margulis' work as
a starting point, boundaries at infinity and extensions of maps between
them are a staple of geometric group theory. In completing the circle
as it were, these metric ideas have since been applied to the above-mentioned
complex analytic question \cite{BB00,CL21}.

In another direction, an important problem in complex geometry is
to understand when a given compact Kähler manifold admits a constant
scalar curvature Kähler metric in the same cohomology class. This
study started in the 1950s by Calabi who for this purpose introduced
a flow on a certain space of metrics on the underlying manifold. In
response to a conjecture of Donaldson, Streets proposed to study a
related weak Calabi flow on the metric completion of Calabi's space
equipped with a certain $L^{2}$-type metric. This flow is nonexpansive
in this metric and therefore metric versions of the Denjoy-Wolff theorem
could be relevant for this set of problems. This is discussed more
in section \ref{sec:The-Calabi-flow} and we refer to \cite{CC02,BDL17,CCh21}
for references on this topic. We thus see an example of an entirely
different use of metric methods, more in the spirit of Teichmüller
theory, for problems in several variable complex geometry. 

The present paper reviews the stars from \cite{K05}. Given a metric
space $X$ with a compactification $\overline{X}$, we associate an
extra structure of the boundary $\partial X:=\overline{X}\setminus X$.
This boundary structure consists of subsets called \emph{stars}, which
are limits of generalized halfspaces, and is an isometry invariant
in case the isometries act naturally on the boundary. Everything is
defined in terms of distances, without any knowledge about the existence
of geodesics. The interest of these notions comes from that:
\begin{itemize}
\item the stars measure the failure of Gromov hyperbolicity (\emph{e.g.
}Proposition \ref{prop:visib}), and this is useful when extending
the theory of Gromov hyperbolic spaces to more general metric spaces
(\emph{e.g. }\cite[section 4]{K05});
\item the stars provide a way of describing the asymptotic geometry in any
given compactification, and boundary estimates can sometimes be translated
into qualitative information (\emph{e.g. }Theorem \ref{thm:visible},
\cite[sections 8,9]{K05}, \cite{DF21});
\item the stars restrict the limit sets of nonexpansive maps (Theorem \ref{thm:WDstar});
\item a contraction lemma dictates the dynamics of isometries (Lemma \ref{lem:contraction-lemma});
\item there are conceivable versions of boundary extensions of isometric
maps (\emph{cf. }Question G).
\end{itemize}
As I have argued in a paper originally entitled \emph{From linear
to metric} \cite{K21b}, the methods discussed here are also useful
in other mathematical and scientific subjects. And while the metric
ideas in the present paper are rather unified and focused, their consequences
belong to a diverse set of topics. Let me illustrate the latter by
highlighting three such results. The final section establishes:
\begin{thm*}
(Theorem \ref{thm:randomvisible}) Let $R_{n}=f_{1}\circ f_{2}\circ...\circ f_{n}$
be the composition of randomly selected holomorphic self-maps of a
bounded domain $X$ in $\mathbb{C}^{N}$. Let $d$ denote the Kobayashi
distance and assume $X$ is a weak visibility domain in the sense
of Bharali-Zimmer. Then a.s. unless
\[
\frac{1}{n}d(x,R_{n}x)\rightarrow0,
\]
as $n\rightarrow\infty$, there is a random point $\xi\in\partial X$
independent of $x$ such that 
\[
R_{n}x\rightarrow\xi,
\]
as $n\rightarrow\infty$. 
\end{thm*}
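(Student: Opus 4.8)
The plan is to turn the random composition into a subadditive cocycle, extract a linear escape rate by an ergodic theorem, and then let the star machinery together with the weak-visibility hypothesis upgrade this escape into honest convergence to a single boundary point.

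First I would encode the randomness as a measure-preserving system $(\Omega,\mathbb{P},T)$, with $T$ the shift on the sequences $\omega=(f_1,f_2,\dots)$ and $R_n=R_n(\omega)=f_1\circ\cdots\circ f_n$. Fix a basepoint $x\in X$ and put $a_n(\omega)=d(x,R_nx)$. Since holomorphic self-maps are nonexpansive for the Kobayashi distance (Schwarz--Pick), so is each $R_n$, and from $R_{n+m}=R_n\circ(f_{n+1}\circ\cdots\circ f_{n+m})$ one gets $a_{n+m}(\omega)\le a_n(\omega)+a_m(T^n\omega)$. Under the natural first-moment hypothesis $\mathbb{E}\,d(x,f_1x)<\infty$, Kingman's subadditive ergodic theorem gives a.s. a limit $\ell=\lim_n\frac1n a_n\ge 0$, and the same nonexpansiveness shows $\ell$ does not depend on $x$. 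The event $\{\frac1n d(x,R_nx)\to 0\}$ is exactly $\{\ell=0\}$, so on its complement $\ell>0$ and $d(x,R_nx)\to\infty$; in particular $R_nx$ leaves every compact subset of $X$ and all its accumulation points lie in $\partial X$.

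On $\{\ell>0\}$ I would next produce, for a.e.\ $\omega$, a single horofunction $h_\omega$ realizing the drift, i.e.\ $-\frac1n h_\omega(R_nx)\to\ell$. This is the ergodic-theoretic heart: applying Kingman again to the shifted cocycles yields a sublinear ray-approximation along which the normalized Busemann functions converge, and this is precisely the mechanism behind Theorem \ref{thm:WDstar}. Invoking that theorem, the limit set of the orbit $(R_nx)$ is contained in the star $S(h_\omega)\subset\partial X$ determined by $h_\omega$; because the horofunction, and hence the star, is built from the maps rather than from $x$, the star $S(h_\omega)$ is independent of the chosen basepoint. It then remains to collapse the star to a point, which is exactly what weak visibility provides: by Theorem \ref{thm:visible} the stars of a weak visibility domain reduce to single points, so $S(h_\omega)=\{\xi\}$ for a (measurable, hence random) point $\xi=\xi(\omega)\in\partial X$, whence $R_nx\to\xi$ in $\overline{X}$. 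Independence of the basepoint is now immediate: for any $x'$ the drift is the same $\ell>0$ and the associated star is the same $S(h_\omega)=\{\xi\}$, so $R_nx'\to\xi$ as well; equivalently, $d(R_nx,R_nx')\le d(x,x')$ keeps the two orbits at bounded Kobayashi distance, and visibility forbids two boundary-bound sequences at bounded distance from having distinct limits.

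The step I expect to be the main obstacle is the middle one: passing from the bare divergence $d(x,R_nx)\to\infty$ to a single escape direction captured by one star. Positivity of the drift is routine, but isolating the direction requires the full force of the ergodic theorem for nonexpansive maps, and one must check measurability of $\omega\mapsto h_\omega$ (so that $\xi$ is a genuine random variable) as well as the compatibility of the metric star structure with the Euclidean compactification $\overline{X}$ in which $\partial X$ and the Bharali--Zimmer visibility condition are phrased. Once that direction is pinned down, visibility does the remaining geometric work and converts the asymptotic star statement into convergence to the boundary point $\xi$.
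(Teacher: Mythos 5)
Your outline is correct, but it is packaged differently from the paper's own proof, which is direct and does not pass through the star theorems at all. The paper fixes $x$, applies \cite[Proposition 4.2]{KM99} to the ergodic cocycle to produce a subsequence $n_i$ along which $d(x,R_{n_i}x)-d(x,R(n_i-k,T^k\omega)x)\geq(\tau-\epsilon)k$, deduces by nonexpansiveness that the Gromov-product-type quantity $d(x,R_{n_i}x)+d(x,R_kx)-d(R_{n_i}x,R_kx)$ tends to infinity, and then shows that weak visibility caps this quantity by $2C+3\kappa$ for orbit points accumulating at two distinct boundary points (by forcing the connecting $(1,\kappa)$-almost geodesics through a compact set), a contradiction. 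You instead factor the argument through the random analogue of Theorem \ref{thm:WDstar} (this is \cite[Theorem 18]{K05}, stated after Theorem \ref{thm:randomvisible} in the paper; the deterministic Theorem \ref{thm:WDstar} you cite does not apply to a composition of varying maps, though the Kingman-along-shifted-cocycles mechanism you describe is exactly how the random version is proved) and then collapse the dual star to a point using the fact that visibility forces $S(\xi)=\{\xi\}$ --- that is Theorem \ref{thm:vishyp}, not Theorem \ref{thm:visible} which is the curvature-type boundary estimate. The author explicitly endorses this composite route in the remark following the theorem, so your decomposition is viable; what the direct proof buys is that it avoids the star formalism, avoids the properness hypothesis of \cite[Theorem 18]{K05}, and sidesteps the point you rightly flag as delicate: the metric functional $h_\omega$ of Theorem \ref{thm:ergodic-1-1} lives in the metric compactification, whereas the stars, the limit point $\xi$, and the Bharali--Zimmer visibility condition all live in the Euclidean closure, and the conclusion of the star theorem is containment in the \emph{dual} star $S^{\vee}(\xi)$ of a subsequential Euclidean limit of the orbit, not in a star of a horofunction. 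Under visibility both sets are singletons so your argument closes, but in a write-up you would need to run the escape estimate in $\overline{X}\subset\mathbb{C}^{N}$ rather than in the horofunction boundary. Your basepoint-independence argument (bounded Kobayashi distance between orbits plus visibility excludes distinct Euclidean limits) is sound and is essentially what the paper leaves implicit.
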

In section \ref{sec:The-Calabi-flow} the following improvement of
\cite[Corollary 4.2]{X21} is obtained:
\begin{cor*}
(Corollary \ref{cor:Xia}) Let $(Y,\omega)$ be a compact Kähler manifold
that is geodesically unstable. Let $\Phi_{t}$ be the weak Calabi
flow on the associated completed space $\mathcal{E}^{2}$ of Kähler
metrics in the same class. Then $\Phi_{t}(x)$ lies on sublinear distance
to a unique geodesic ray $\gamma$ as $t\rightarrow\infty$.
\end{cor*}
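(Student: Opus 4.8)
The plan is to prove Corollary \ref{cor:Xia} by reducing it to an abstract metric-dynamics statement about the weak Calabi flow and then invoking the star/contraction machinery reviewed in this paper. The starting point is that the weak Calabi flow $\Phi_t$ is a nonexpansive semiflow on the complete geodesic metric space $(\mathcal{E}^2, d)$, as recorded by Streets and used in \cite{X21}. Nonexpansiveness means $d(\Phi_t(x),\Phi_t(y)) \le d(x,y)$ for all $t$, so the orbit $t \mapsto \Phi_t(x)$ is a nonexpansive trajectory and the general theory of limits of nonexpansive maps (Theorem \ref{thm:WDstar}) applies. Geodesic instability of $(Y,\omega)$ is, by the result of \cite{X21} that we are improving, precisely the hypothesis guaranteeing that the drift is positive, i.e.
\[
\lim_{t\to\infty}\frac{1}{t}\,d(x,\Phi_t(x)) = \ell > 0 .
\]
So the first step is to translate geodesic instability into this positive linear escape rate along the flow.

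Next I would apply a continuous-time metric Denjoy--Wolff / ergodic-type theorem to upgrade positive drift into the desired sublinear tracking of a geodesic ray. The conceptual engine is the same as in the abstract theorems cited in the introduction: when a nonexpansive orbit escapes to infinity at positive speed, the stars at infinity constrain its limit behaviour, and in a space that is sufficiently nonpositively curved or geodesic, the orbit must shadow a geodesic ray. Concretely, I would produce a candidate ray $\gamma$ as a limit (along a subsequence $t_n \to \infty$) of the geodesic segments $[x, \Phi_{t_n}(x)]$, using completeness and a suitable local compactness or convexity property of $\mathcal{E}^2$ to extract the limiting ray, and then show
\[
\lim_{t\to\infty}\frac{1}{t}\,d\bigl(\Phi_t(x),\gamma(\ell t)\bigr) = 0 ,
\]
which is exactly the assertion that $\Phi_t(x)$ lies at sublinear distance to $\gamma$. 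The positivity of $\ell$ is what forces the segments to align coherently rather than oscillate.

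Uniqueness of $\gamma$ is the part I would handle with care. The natural argument is a Busemann-function or horofunction comparison: two rays both tracked sublinearly by the same orbit must be asymptotic, and in the relevant nonpositively curved setting asymptotic rays from a common point coincide. Here the geometry of $\mathcal{E}^2$ — its $\mathrm{CAT}(0)$-like convexity of the distance function along geodesics — is what delivers uniqueness, and I would cite the metric properties of $\mathcal{E}^2$ established in the sources on the Mabuchi/$L^2$-type metric.

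The main obstacle I anticipate is not the soft metric dynamics but verifying that $(\mathcal{E}^2,d)$ has enough structure to run these limiting and uniqueness arguments: namely that geodesic segments between the relevant points exist and vary well enough to extract a limiting ray, and that the convexity needed for uniqueness genuinely holds on the completion rather than only on the smooth locus of K\"ahler potentials. The improvement over \cite[Corollary 4.2]{X21} presumably lies in weakening the hypotheses or strengthening the conclusion from convergence of directions to genuine sublinear ray-tracking, so the crux is to show that the abstract nonexpansive-orbit theorem (Theorem \ref{thm:WDstar}) applies verbatim once geodesic instability supplies positive drift, thereby bypassing any curvature assumption that \cite{X21} may have required.
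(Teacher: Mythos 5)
There is a genuine gap, and it is located exactly where you flagged your ``main obstacle.'' The abstract engine you propose to invoke, Theorem \ref{thm:WDstar}, is the wrong one: that theorem requires a \emph{proper} metric space with a sequentially compact compactification and concludes only that the limit points of the orbit lie in a dual star $S^{\vee}(\xi)$ --- it says nothing about sublinear tracking of a geodesic ray, and $\mathcal{E}^{2}$ is an infinite-dimensional, non-locally-compact space to which its hypotheses do not apply. For the same reason your construction of the candidate ray $\gamma$ as a subsequential limit of the geodesic segments $[x,\Phi_{t_n}(x)]$ cannot be run as stated: extracting such a limit requires a compactness that $\mathcal{E}^{2}$ does not have. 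The paper is explicit on this point, remarking that the argument it uses ``constructs the geodesic from the flow in a way that does not use any compactness argument.''

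The correct route, which the paper takes, is: (i) geodesic instability means the infimum $C$ of the local upper gradient of the Mabuchi energy is strictly positive, and by \cite{CaL10} the gradient flow of a convex function on a CAT(0) space with positive gradient infimum escapes linearly; (ii) pass to the time-one map $f=\Phi_{1}$, a nonexpansive self-map of the complete CAT(0) space $\mathcal{E}^{2}$ with $d_{f}>0$, and apply Theorem \ref{thm:cat0}, whose sublinear-tracking conclusion rests on the multiplicative ergodic theorem of \cite{KM99} (together with \cite{GV12} to identify minimal displacement with translation number). That theorem builds the ray by a convexity/subadditivity argument internal to CAT(0) geometry, not by extracting limits of segments. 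Your drift computation and your uniqueness discussion via convexity of the distance function are consistent with the paper, but without replacing the compactness-based extraction by the Karlsson--Margulis mechanism (or an equivalent), the central step of the proof is missing.
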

In particular, which is a significantly weaker statement, $\Phi_{t}(x)$
converges as $t\rightarrow\infty$ to the point defined by $\gamma$
in the visual boundary. This type of statement was established and
exploited in \cite{CS14} for the purpose of partially confirming
a conjecture of Tian.

Finally, recall the notion of extremal length initially studied by
Grötzsch, Beurling and Ahlfors. Let $M$ be a closed surface with
complex structure $x$ and $\alpha$ an isotopy class of a simple
closed curve on $M$, and define
\[
\mathrm{Ext}_{x}(\alpha)=\sup_{\rho\in[x]}\frac{\ell_{\rho}(\alpha)}{\mathrm{Area}(\rho)},
\]
where the supremum is taken over all metrics in the conformal class
of $x$. We have from section \ref{sec:Wolff-Denjoy-type-theorems}:
\begin{cor*}
(Corollary \ref{cor:extremallength}) Let $f$ be a holomorphic self-map
of the Teichmüller space of $M$. Then there exists a simple closed
curve $\beta$ such that 
\[
\lim_{n\rightarrow\infty}\mathrm{Ext}_{f^{n}(x)}(\beta)^{1/n}=\lim_{n\rightarrow\infty}\left(\sup_{\alpha}\frac{\mathrm{Ext}_{f^{n}(x)}(\alpha)}{\mathrm{Ext}_{x}(\alpha)}\right)^{1/n},
\]
where the supremum is taken over all simple closed curves on $M$.
\end{cor*}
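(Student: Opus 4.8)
The plan is to read both sides of the claimed identity through the Teichmüller metric $d$ and Kerckhoff's formula, and to manufacture the curve $\beta$ out of a metric functional attached to the forward orbit of $f$. First recall that, by Royden's theorem, the Teichmüller metric \emph{is} the Kobayashi metric on Teichmüller space, so a holomorphic $f$ is nonexpansive for $d$; consequently $n\mapsto d(x,f^n x)$ is subadditive and the \emph{drift} $\ell:=\lim_n \tfrac1n d(x,f^n x)=\inf_n \tfrac1n d(x,f^n x)$ exists by Fekete's lemma. Kerckhoff's formula gives, for all $y,z$,
\[
d(y,z)=\tfrac12\log\sup_\alpha\frac{\mathrm{Ext}_z(\alpha)}{\mathrm{Ext}_y(\alpha)},
\]
the supremum over simple closed curves (equivalently measured foliations, by density). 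Taking $y=x$, $z=f^nx$ identifies the bracket on the right-hand side of the Corollary as $e^{2d(x,f^nx)}$, so that right-hand side equals $e^{2\ell}$. The same formula shows that for \emph{every} curve $\beta$ one has $\mathrm{Ext}_{f^nx}(\beta)\le e^{2d(x,f^nx)}\,\mathrm{Ext}_x(\beta)$, whence $\limsup_n \mathrm{Ext}_{f^nx}(\beta)^{1/n}\le e^{2\ell}$. Thus the upper bound in the Corollary is automatic and the whole problem reduces to producing one $\beta$ with a matching lower bound.

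For that lower bound I would run the metric-functional mechanism underlying the Wolff--Denjoy results of Section \ref{sec:Wolff-Denjoy-type-theorems} (cf.\ Theorem \ref{thm:WDstar}): applied to the nonexpansive $f$ with drift $\ell$, it yields a metric functional $h$, a limit of the functions $y\mapsto d(y,f^{m_k}x)-d(x,f^{m_k}x)$ along a suitable subsequence, satisfying $h(f^nx)\le-\ell n$ for every $n$ and normalized by $h(x)=0$. When $\ell>0$ the orbit leaves every compact set, so $h$ is a genuine boundary functional. The crucial structural input is then the identification (Liu--Su, Walsh) of the metric/horofunction compactification of Teichmüller space with the Gardiner--Masur compactification, together with Miyachi's explicit form of its boundary functionals: every such $h$ is
\[
h(y)=\log\sup_\alpha\frac{\mathcal{E}(\alpha)}{\mathrm{Ext}_y(\alpha)^{1/2}},
\]
for a Gardiner--Masur boundary function $\mathcal{E}\ge 0$, not identically zero because $h(x)=0$ forces the supremum to be $1$.

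Combining the two, the inequality $h(f^nx)\le-\ell n$ reads $\sup_\alpha \mathcal{E}(\alpha)/\mathrm{Ext}_{f^nx}(\alpha)^{1/2}\le e^{-\ell n}$, and discarding the supremum gives, for \emph{every} curve $\alpha$,
\[
\mathrm{Ext}_{f^nx}(\alpha)\ge \mathcal{E}(\alpha)^2\,e^{2\ell n}.
\]
I would then take $\beta$ to be any simple closed curve with $\mathcal{E}(\beta)>0$: such a $\beta$ exists because $\mathcal{E}$, being a locally uniform limit of the continuous, degree-one homogeneous functions $\mathrm{Ext}_\bullet(\cdot)^{1/2}$ on the space of measured foliations, is continuous and positive on a nonempty open cone, which meets the projectively dense simple closed curves. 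For this $\beta$ the displayed inequality yields $\liminf_n \mathrm{Ext}_{f^nx}(\beta)^{1/n}\ge e^{2\ell}$, matching the upper bound, so $\mathrm{Ext}_{f^nx}(\beta)^{1/n}\to e^{2\ell}$, which is exactly the right-hand side. The degenerate case $\ell=0$ needs no functional: Kerckhoff's formula read in the reversed direction already forces $\mathrm{Ext}_{f^nx}(\beta)\ge e^{-2d(x,f^nx)}\mathrm{Ext}_x(\beta)$ and hence $\liminf_n \mathrm{Ext}_{f^nx}(\beta)^{1/n}\ge e^{-2\ell}=1$ for every $\beta$.

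The main obstacle I anticipate is not the soft metric dynamics but the translation step: pinning down the precise form of the metric functionals on Teichmüller space and, above all, ensuring that the curve extracted from $\mathcal{E}$ is an honest simple closed curve rather than a general measured foliation. The positivity-plus-density argument is the delicate point, and it is where the finite-dimensionality of the Teichmüller space of a closed surface of genus $\ge 2$ and the exact normalization of the Gardiner--Masur boundary must be used with care.
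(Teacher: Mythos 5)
Your proposal is correct and follows essentially the same route as the paper's proof: Kerckhoff's formula to identify the right-hand side with $e^{2\tau(f)}$, the metric spectral theorem (Theorem \ref{thm:metricspectral-1}) to produce a metric functional $h$ with $h(f^{n}x)\leq-\tau(f)n$, and the Liu--Su/Miyachi identification of the metric compactification with the Gardiner--Masur boundary to convert $h$ into a matching lower bound on $\mathrm{Ext}_{f^{n}(x)}(\beta)$ for a curve $\beta$ with positive Gardiner--Masur value. The only differences are cosmetic: you spell out the continuity-plus-density argument for finding a simple closed curve $\beta$ with $\mathcal{E}(\beta)>0$ and treat the degenerate case $\tau(f)=0$ separately, both of which the paper leaves implicit.
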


\paragraph*{Acknowledgements: }

Part of this text was presented at the INdAM workshop in the Palazzone
in Cortona, Tuscany in September of 2021. I heartily thank Filippo
Bracci, Hervé Gaussier, and Andrew Zimmer, for organizing this stimulating
workshop and for their generous invitation. It happened to take place
around the time of a pandemic and just a week short of the 700th anniversary
of the death of Dante Alighieri, the great Florentine poet. Given
this context, upon arrival in Cortona admiring the clear starry night
sky, and in view of the topic of my lecture, however modest, it was
impossible not to associate to Dante since the concept of stars is
such an important one in his \emph{Commedia}. 

\begin{center}  \emph{``E quindi uscimmo a riveder le stelle.''}

\end{center}

\section{Boundaries, halfspaces, and stars at infinity}

\paragraph*{Boundaries.}

Let $(X,d)$ denote a metric space, for example a complex domain with
the Kobayashi metric assumed (Kobayashi) hyperbolic. Let $\overline{X}$
be a compactification of $X$ with which we mean a compact Hausdorff
space $\overline{X}$ and a topological embedding $i:X\rightarrow\overline{X}$
such that $\overline{X}=\overline{i(X)}$. Often we suppress the map
$i$ and consider $X$ simply as a subset of $\overline{X}$. In the
best case scenario the isometries of $X$ extend naturally to homeomorphisms
of $\overline{X}$, in this case we speak of an $Isom(X)$-compactification.
The corresponding \emph{(ideal) boundary} is $\partial X:=\overline{X}\setminus i(X)$.
There is also a weaker notion that only insists that $i$ is continuous
and injective, but that $X$ is not necessarily homeomorphic to $i(X)$,
we will refer to this as a \emph{weak compactification.} 
\begin{example}
In the case of bounded domains in complex vector spaces $\mathbb{C}^{N}$,
we can consider the closure which often is called the \emph{natural}
boundary, here I will call it the \emph{natural extrinsic} boundary.
Complex automorphisms of the domain may not extend to the boundary,
the question of when they do is a classical topic as mentioned in
the introduction. 
\end{example}

\begin{example}
For any metric space there is always an intrinsic method of a weak
$Isom(X)$-compactification, called \emph{the horofunction bordification}
or as I prefer, following Rieffel, \emph{the metric compactification},
which is increasingly seen as being of fundamental importance. Given
a base point $x_{0}$ of the metric space $X$, let 
\[
\Phi:X\rightarrow\mathbb{R\mathrm{^{X}}}
\]
be defined via
\[
x\mapsto h_{x}(\cdot):=d(\cdot,x)-d(x_{0},x).
\]
With the topology of pointwise convergence, this is a continuous injective
map and the closure $\overline{\Phi(X)}$ is compact and Hausdorff.
The elements of $\overline{X}^{h}:=\overline{\Phi(X)}$ are called
\emph{metric functionals}. In the case $X$is a proper geodesic space
this construction gives a compactification in the stricter sense.
If $X$ is a proper metric space the elements of $\partial X$ are
called \emph{horofunctions}, the typical example of such comes from
geodesic rays and are called \emph{Busemann functions. }(in the literature
starting with Rieffel this word is also used for limits along almost
geodesics). 
\end{example}

A major question is to investigate the exact relation between these
two examples: the metric compactification in the Kobayashi metric,
which is the natural intrinsic compactification, and the natural extrinsic
boundary of bounded complex domains. See for example \cite{AFGG22}.
Let me record it as follows:

\noun{Question A: }\emph{Let $X$ be a bounded complex domain with
the Kobayashi metric. How and when can one relate the natural extrinsic
boundary with the boundary from the metric compactification? }

Let $(X,d)$ denote a metric space and $\overline{X}$ a compactification
of $X$. For a subset $W\subset\overline{X}$ we set 
\[
d(x,W):=\inf_{w\in W\cap X}d(x,w).
\]

\paragraph*{Halfspaces. }

Let $x_{0}\in X$. The \emph{(generalized) halfspace }defined by $W\subset\overline{X}$
and real number $C$ is
\[
H(W,C):=H^{x_{0}}(W,C):=\left\{ z:d(z,W)\leq d(z,x_{0})+C\right\} .
\]
We also use the notation $H(W):=H(W,0)$. These notions have the advantage
that they do not refer to geodesics which may or may not exist. 

\paragraph*{Stars at infinity.}

Let $\xi\in\partial X$ and denote by $\mathcal{V}_{\xi}$ the collection
of open neighborhoods of $\xi$ in $\overline{X}$. The \emph{star
of }$\xi$ \emph{based at }$x_{0}$ is
\[
S^{x_{0}}(\xi)=\bigcap_{V\in\mathcal{V}_{\xi}}\overline{H(V)},
\]
where the closure is taken in $\overline{X}$, and the \emph{star
of }$\xi$ is 
\[
S(\xi)=\overline{\bigcup_{C\geq0}\bigcap_{V\in\mathcal{V}_{\xi}}\overline{H(V,C)}}.
\]

It is immediate that $\xi\in S^{x_{0}}(\xi)\subset S(\xi)$. The second
definition removes an a priori dependence on $x_{0}$. In all examples
I can think of, the two definitions actually coincide, which in other
words means that the first definition is independent of the base point
chosen. 

\noun{Question B: }\emph{When is $S(\xi)=S^{x_{0}}(\xi)$?}

The motivation for calling these sets \emph{stars }is that they are
subsets at infinity of the space and they have a tendency to be star-shaped
in appropriate senses or even to coincide with the notion of star
in the theory of simplicial complexes. In addition they are closely
related to visibility properties of the compactification (see Proposition
\ref{prop:visib}) so one could appeal to the light emitting property
of physical stars.

A \emph{face }is a non-empty intersection of stars. The following
notion will be used below, the \emph{dual star of }$\xi$:
\[
S^{\vee}(\xi):=\left\{ \eta\in\partial X\,:\,\xi\in S(\eta)\right\} 
\]

It was observed in \cite{K05} that in all the examples considered
there, $S^{\vee}(\xi)=S(\xi)$, which could be called \emph{star-reflexivity},
and raised the question whether or when it is the case. In an insightful
paper by Jones and Kelsey \cite{JK22} examples of homogeneous graphs,
certain Diestel-Leader graphs, with their metric compactification
were shown \emph{not }to have this property. Understanding this phenomenon
better has some additional interest in view of results like Theorem
\ref{thm:WDstar} or Proposition \ref{prop:horoball} below.

\noun{Question C: }\emph{Which spaces and compactifications are star
reflexive in the sense that }$S^{\vee}(\xi)=S(\xi)$ \emph{for all
$\xi\in\partial X$?}

An obvious case of $S^{\vee}(\xi)=S(\xi)$ is when $S(\xi)=\left\{ \xi\right\} .$
Such points are called \emph{hyperbolic }because for example for any
Gromov hyperbolic space with its standard Gromov boundary every point
is hyperbolic. We call compactifications with this property \emph{hyperbolic.
}Another classical example of a hyperbolic compactification is the
end compactification of any proper metric space. For bounded complex
domains with Kobayashi metric, every natural extrinsic boundary point
that is $C^{2}$-smooth and strictly pseudoconvex is hyperbolic as
P.J. Thomas informed me, see also below and \cite[Corollary 35]{K05}.

\noun{Question D: }\emph{For convex or pseudoconvex domains with the
usual boundary, what are the Kobayashi stars?}

Hilbert's metric on convex domains has a very simple answer provided
in \cite[Proposition 32]{K05}: the stars and faces coincides with
the usual homonymous notions for convex sets. 

Teichmüller spaces of closed surfaces are of great importance and
are biholomorphic to bounded pseudoconvex domains. They have a natural
metric, the Teichmüller metric, which Royden showed coincides with
the Kobayashi metric. The recent paper by Duchin and Fisher \cite{DF21}
makes substantial progress toward \cite[Conjecture 46]{K05} determining
the stars in the Teichmüller metric with the Thurston compactification
which is defined in terms of topology and hyperbolic geometry. It
has been observed for a long time that the complex analytic notions
have a complicated relationship with the concepts from the approach
of hyperbolic geometry, but here there is a hope of a clean tight
connection. 

One can define the \emph{star-distance} $d_{\star}$ as in \cite{K05,DF21}
to be the induced path distance on $\partial X$, in the extended
sense that distances may be infinite, from defining 
\[
d_{\star}(\xi,\eta)=0\;\Longleftrightarrow\:\xi=\eta
\]
and if $\xi\neq\eta$
\[
d_{\star}(\xi,\eta)=1\;\Longleftrightarrow\:\eta\in S(\xi)\textrm{ or }\xi\in S(\eta).
\]

In the case of an $Isom(X)$-compactification the isometries obviously
act by isometry also on $(\partial X,d_{\star})$. It is a trivial
concept in case $X$ is Gromov hyperbolic with its standard boundary.
This is related to Tits incidence geometry at infinity in nonpositive
curvature and conjecturally (\cite{K05,DF21}) the star distance restricted
to the simple closed curves of the Thurston boundary isometric to
the curve complex defined in pure topological terms, see \cite{DF21}
for more details and for a conjectural outline arriving at such a
result.

Let me mention the following useful fact, the \emph{sequence criterion
}for star membership of Duchin-Fisher extending a lemma in \cite{JK22}:
\begin{lem}
\cite{DF21} Let $(X,d)$ be a metric space and $\overline{X}$ a
compactification of $X$. Assume that $\overline{X}$ is first countable.
Then $\eta\in S(\xi)$ if and only if for every neighborhood $U$
of $\eta$ in $\overline{X}$, there are sequences $x_{n}\rightarrow\xi,$
$y_{n}\rightarrow U$ and a constant $C\geq0$ such that 
\[
d(y_{n},x_{n})\leq d(y_{n},x_{0})+C.
\]
In particular, if there is such sequences with $y_{n}\rightarrow\eta$,
then $\eta\in S(\xi)$.
\end{lem}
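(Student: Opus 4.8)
The plan is to unwind the definition of $S(\xi)$ into a statement about convergent sequences and then run a compactness argument. Abbreviating $A_{C}:=\bigcap_{V\in\mathcal{V}_{\xi}}\overline{H(V,C)}$, the definition reads $S(\xi)=\overline{\bigcup_{C\geq0}A_{C}}$, so by first countability $\eta\in S(\xi)$ if and only if some sequence lying in $\bigcup_{C\geq0}A_{C}$ converges to $\eta$. The two elementary facts I would use repeatedly are: (i) a point lies in $A_{C}$ precisely when, within \emph{every} neighborhood $V$ of $\xi$, it is a limit of points of the halfspace $H(V,C)\subseteq X$; and (ii) for $z\in X$, the relation $z\in H(V,C)$ just says $\inf_{w\in V\cap X}d(z,w)\leq d(z,x_{0})+C$, so a witness $w\in V\cap X$ with $d(z,w)$ arbitrarily close to this infimum can be extracted---here the density of $X$ in $\overline{X}$ guarantees $V\cap X\neq\emptyset$.

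I would first dispatch the easy \emph{in particular} clause, since it isolates the mechanism of the whole proof. Suppose $x_{n}\to\xi$, $y_{n}\to\eta$ and a constant $C$ satisfy $d(y_{n},x_{n})\leq d(y_{n},x_{0})+C$. Fixing any $V\in\mathcal{V}_{\xi}$, the convergence $x_{n}\to\xi$ puts $x_{n}\in V\cap X$ for all large $n$, whence $d(y_{n},V)\leq d(y_{n},x_{n})\leq d(y_{n},x_{0})+C$ and thus $y_{n}\in H(V,C)$ eventually; since $y_{n}\to\eta$ this forces $\eta\in\overline{H(V,C)}$. As $V$ was arbitrary, $\eta\in A_{C}\subseteq S(\xi)$. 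This step needs neither first countability nor compactness.

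For the full backward implication the difficulty---and what I expect to be the main obstacle---is that the constant $C$ is permitted to depend on the neighborhood $U$, so one cannot simply pass to a limit along a single sequence and a single $C$. Here I would use first countability to fix a decreasing neighborhood basis $\{U_{m}\}$ of $\eta$, and exploit that $\overline{X}$, being compact Hausdorff, is regular, to arrange $\overline{U_{m+1}}\subseteq U_{m}$; this is exactly what makes $q_{m}\to\eta$ whenever $q_{m}\in\overline{U_{m}}$. Applying the hypothesis to each $U_{m}$ yields sequences $x_{n}^{(m)}\to\xi$, points $y_{n}^{(m)}$ eventually in $U_{m}$, and constants $C_{m}$ with $d(y_{n}^{(m)},x_{n}^{(m)})\leq d(y_{n}^{(m)},x_{0})+C_{m}$. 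Since a compact first countable space is sequentially compact, I extract a convergent subsequence $y_{n_{j}}^{(m)}\to q_{m}\in\overline{U_{m}}$; running the argument of the previous paragraph along this subsequence (with limit $q_{m}$ and constant $C_{m}$) gives $q_{m}\in A_{C_{m}}\subseteq\bigcup_{C\geq0}A_{C}$. As $q_{m}\to\eta$, we conclude $\eta\in\overline{\bigcup_{C\geq0}A_{C}}=S(\xi)$.

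Finally, for the forward implication assume $\eta\in S(\xi)$ and fix a neighborhood $U$ of $\eta$. Since $\eta$ lies in the closure of $\bigcup_{C\geq0}A_{C}$, the open set $U$ meets it, so there are $C\geq0$ and a point $p\in U\cap A_{C}$. Choosing a decreasing neighborhood basis $\{V_{n}\}$ of $\xi$, the relation $p\in A_{C}$ gives $p\in\overline{H(V_{n},C)}$ for every $n$; as $U$ is an open neighborhood of $p$ it meets each $H(V_{n},C)$, so I can pick $y_{n}\in U\cap H(V_{n},C)\subseteq X$. Unwinding $y_{n}\in H(V_{n},C)$ by fact (ii) produces $x_{n}\in V_{n}\cap X$ with $d(y_{n},x_{n})\leq d(y_{n},x_{0})+C+1$. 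Then $x_{n}\to\xi$ because the $V_{n}$ form a basis at $\xi$, while every $y_{n}\in U$, and the single constant $C+1$ works; this is precisely the asserted family of sequences. The only care needed is the harmless additive loss incurred in approximating the infimum, which is absorbed into the constant.
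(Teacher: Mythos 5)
The paper offers no proof of this lemma---it is quoted from Duchin--Fisher \cite{DF21}---so there is no in-text argument to compare yours against. Your proof is correct and complete: the \emph{in particular} clause and the forward implication are straightforward unwindings of the definitions (the $+1$ from approximating the infimum $d(y_n,V_n)$ and the reduction to open $U$ are both harmless), and you correctly isolate and handle the one genuinely delicate point, namely that the constant is allowed to depend on the neighborhood $U$, by using regularity of the compact Hausdorff space $\overline{X}$ to build a countable basis with $\overline{U_{m+1}}\subseteq U_{m}$ and sequential compactness (compact plus first countable) to produce points $q_{m}\in A_{C_{m}}\cap\overline{U_{m}}$ converging to $\eta$, which lands $\eta$ in the outer closure defining $S(\xi)$.
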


Isometries, when well-defined as maps of $\overline{X}$, preserve
the star distance. This stands in contrast to an opposite phenomenon
namely that topologically isometries tend to have strong contraction
properties on $\overline{X}$ as expressed by Lemma \ref{lem:contraction-lemma}
below. Especially this is the case when there are many hyperbolic
points but on the other hand it can reduce to no contraction for example
in case of the euclidean spaces with the usual visual boundaries (if
one takes an $\ell^{1}$ metric instead there is some contraction).
The \emph{north-south dynamics} is one of the most important features
in the theory of word hyperbolic group, and states that for any sequence
of group elements $g_{n}$ which converges to $\xi^{+}$ when $n\rightarrow\infty$
and $g_{n}^{-1}$ to $\xi^{-}$ , it holds that for any two neighborhoods
$V^{+}$of $\xi^{+}$ and $V^{-}$ of $\xi^{-}$ eventually everything
outside $U^{-}$ is mapped inside $U^{+}$ by $g_{n}.$ This is generalized
without any hyperbolicity assumption, and to any compactification,
just adding an ``$H$'':
\begin{lem}
\label{lem:contraction-lemma}(The contraction lemma \cite{K05})
Let $(X,d)$ be a metric space and $\overline{X}$ a compactification
of $X$. Let $g_{n}$ be a sequence of isometries such that $g_{n}x_{0}\rightarrow\xi^{+}\in\partial X$
and $g_{n}^{-1}x_{0}\rightarrow\xi^{-}\in\partial X$ as n $n\rightarrow\infty$.
Then for any neighborhoods $V^{+}$ of $\xi^{+}$ and $V^{-}$ of
$\xi^{-}$, there exists $N>0$ such that
\[
g_{n}(X\setminus H(V^{-}))\subset H(V^{+})
\]
for all $n\geq1$.
\end{lem}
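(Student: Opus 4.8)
The plan is to unwind the definition of the halfspaces and reduce everything to two applications of the isometry property of $g_n$, chained together by the hypothesis that $z$ lies outside $H(V^-)$. Throughout I would keep in mind that $H(V^+)=H(V^+,0)=\{w:d(w,V^+)\le d(w,x_0)\}$ and that $d(w,W)=\inf_{u\in W\cap X}d(w,u)$, so that any single point of $W\cap X$ already furnishes an upper bound for $d(w,W)$. I would also first remark that the conclusion is meant to hold ``for all $n\ge N$'' rather than for all $n\ge 1$: before $g_nx_0$ and $g_n^{-1}x_0$ have entered $V^+$ and $V^-$ there is no reason for the inclusion to hold, and indeed the choice of the threshold $N$ is exactly what makes the argument run.

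The first step is to fix $N$ so that $g_nx_0\in V^+$ and $g_n^{-1}x_0\in V^-$ simultaneously for all $n\ge N$. This is possible because $V^+$ and $V^-$ are open neighborhoods of the limits $\xi^+$ and $\xi^-$, and because $g_nx_0$ and $g_n^{-1}x_0$ both lie in $X$ (an isometry of $X$ maps $X$ to itself). In particular $g_nx_0\in V^+\cap X$ and $g_n^{-1}x_0\in V^-\cap X$, so both intersections are nonempty and both points are legitimate test points in the infima defining $d(\cdot,V^+)$ and $d(\cdot,V^-)$.

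The heart of the argument is then a short chain of inequalities, valid for $n\ge N$ and any $z\in X\setminus H(V^-)$. Testing the infimum for $d(g_nz,V^+)$ against the point $g_nx_0\in V^+\cap X$ and using the isometry identity $d(g_nz,g_nx_0)=d(z,x_0)$ gives
\[
d(g_nz,V^+)\le d(g_nz,g_nx_0)=d(z,x_0).
\]
Since $z\notin H(V^-)$, by definition $d(z,x_0)<d(z,V^-)$. Finally, testing the infimum for $d(z,V^-)$ against $g_n^{-1}x_0\in V^-\cap X$ and using $d(z,g_n^{-1}x_0)=d(g_nz,x_0)$ gives
\[
d(z,V^-)\le d(z,g_n^{-1}x_0)=d(g_nz,x_0).
\]
Concatenating the three displays yields $d(g_nz,V^+)\le d(z,x_0)<d(z,V^-)\le d(g_nz,x_0)$, hence $d(g_nz,V^+)\le d(g_nz,x_0)$, which is exactly $g_nz\in H(V^+)$.

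I do not expect a genuine analytic obstacle here; the single point requiring care is selecting the right test point for each infimum and keeping the direction of each estimate straight, since $d(\cdot,V^+)$ must be bounded \emph{above} (via $g_nx_0$) while $d(\cdot,x_0)$ must be bounded \emph{below} (via $g_n^{-1}x_0$ through the halfspace hypothesis). It is worth noting, as a closing observation, that the argument never invokes any extension of $g_n$ to $\overline{X}$: only the convergence hypotheses on $g_nx_0$ and $g_n^{-1}x_0$ and the isometry property of $g_n$ on $X$ are used, so the lemma indeed holds for an arbitrary compactification.
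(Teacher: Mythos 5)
Your proof is correct, and it is the standard argument for this lemma (the paper itself gives no proof here, deferring to \cite{K05}, where essentially this same three-inequality chain appears): pick $N$ so that $g_nx_0\in V^+$ and $g_n^{-1}x_0\in V^-$, then test each infimum against the orbit point and use the isometry identities. You are also right that the ``for all $n\geq 1$'' in the statement is a typo for ``for all $n\geq N$''.
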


In \cite{K05} a refinement in the case $Isom(X)$-compactifications
is also formulated. In words, $g_{n}$ eventually maps everything
outside the star of $\xi^{-}$ into any neighborhood of the star of
$\xi^{+}$. Note that it is allowed that the two boundary points are
the same, such as is the case for iterates of a parabolic isometry
in hyperbolic geometry. The interplay between the invariance of the
star distance and the contractive property of Lemma \ref{lem:contraction-lemma}
can sometimes be used to rule out non-compact automorphism groups,
see \cite[Theorem 4]{K05} for an example. In a more classical direction
it recovers Hopf's theorem on ends which states that any topological
space $X$ that is a regular covering space of a nice compact space
must have either 0, 1, 2 or a continuum, of ends. In particular it
applies to finitely generated groups and the ends of their Cayley
graphs. This generalizes in view of Lemma \ref{lem:contraction-lemma}
to any hyperbolic compactification with stars replacing ends. 

\noun{Question E: }\emph{Are there applications of the tension between
the invariance of the star distance and the contraction lemma also
for groups of biholomorphisms of certain complex domains?}

To exemplify this idea:
\begin{prop}
Let $(X,d)$ be a proper metric space and $\overline{X}$ an $Isom(X)$-compactification
of $X$. Assume that a noncompact group of isometries of $X$ fixes
a finite set $F$ of boundary points. Then $F$ is contained in two
stars.
\end{prop}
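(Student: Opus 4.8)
The plan is to manufacture, out of the noncompactness hypothesis, a single sequence of isometries escaping to infinity in two prescribed directions, and then feed that sequence into the contraction lemma. First I would fix a basepoint $x_{0}\in X$ and record that, since the group $G$ is noncompact and $X$ is proper, the orbit $Gx_{0}$ is unbounded; otherwise $G$ would be relatively compact in $Isom(X)$ by an Arzela--Ascoli argument. Because $G$ preserves the finite set $F$, it acts on $F$ through a finite permutation group, so the kernel $G_{0}$ of the homomorphism $G\rightarrow Sym(F)$ has finite index. Writing $G$ as a finite union of $G_{0}$-cosets shows that $G_{0}x_{0}$ is still unbounded, hence $G_{0}$ is again noncompact, and now every element of $G_{0}$ fixes each point of $F$ individually. (If ``fixes $F$'' is already meant pointwise, this reduction is unnecessary.) I would then select $h_{n}\in G_{0}$ with $d(x_{0},h_{n}x_{0})\rightarrow\infty$ and, using compactness of $\overline{X}$ together with properness, so that an unbounded orbit can accumulate only on $\partial X$, pass to a subsequence with $h_{n}x_{0}\rightarrow\xi^{+}\in\partial X$ and $h_{n}^{-1}x_{0}\rightarrow\xi^{-}\in\partial X$.

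Next I would invoke Lemma \ref{lem:contraction-lemma} in its refined star form, valid for $Isom(X)$-compactifications: for every neighborhood $U$ of $S(\xi^{+})$ there is $N$ such that $h_{n}(\overline{X}\setminus S(\xi^{-}))\subseteq U$ for all $n\geq N$. Fix $\eta\in F$ and suppose $\eta\notin S(\xi^{-})$. Then for all large $n$ we have $h_{n}\eta\in U$; but $h_{n}$ fixes $\eta$, so $\eta\in U$. Since $U$ was an arbitrary neighborhood of the closed set $S(\xi^{+})$, and $\overline{X}$, being compact Hausdorff, is regular, this forces $\eta\in S(\xi^{+})$. Hence every $\eta\in F$ lies in $S(\xi^{-})$ or in $S(\xi^{+})$, that is $F\subseteq S(\xi^{+})\cup S(\xi^{-})$, which exhibits $F$ inside two stars.

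The step I expect to be the main obstacle is the legitimate application of the contraction lemma at boundary points. As stated, Lemma \ref{lem:contraction-lemma} concerns the halfspaces $H(V^{\pm})\subseteq X$ and their interior points, whereas the argument above needs the version sending the complement of the star $S(\xi^{-})$ into a neighborhood of the star $S(\xi^{+})$ inside $\overline{X}$; the excerpt asserts this refinement only verbally. I would therefore either quote it directly from \cite{K05} or reprove it through the sequence criterion of Duchin--Fisher, realizing a boundary point of $h_{n}(\overline{X}\setminus S(\xi^{-}))$ as a limit of images of interior points of $X$ lying outside the halfspaces $H(V^{-},C)$ and tracking their accumulation. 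The remaining ingredients---unboundedness of the orbit, survival of noncompactness under passage to the finite-index subgroup $G_{0}$, and the regularity argument placing $\eta$ in the closed star---are routine and I would not dwell on them.
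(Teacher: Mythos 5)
Your proposal is correct and follows essentially the same route as the paper's own proof: pass to the finite-index subgroup fixing $F$ pointwise, extract a sequence $g_{n}x_{0}\rightarrow\xi^{+}$, $g_{n}^{-1}x_{0}\rightarrow\xi^{-}$ using properness and noncompactness, and apply the refined ($Isom(X)$-compactification) form of the contraction lemma to conclude that a fixed boundary point outside $S(\xi^{-})$ must lie in $S(\xi^{+})$. You supply more detail than the paper (the Arzel\`a--Ascoli step, the regularity argument placing $\eta$ in the closed star) and correctly flag that the star version of Lemma \ref{lem:contraction-lemma} is only stated verbally here and must be taken from \cite{K05}, which is exactly what the paper implicitly does.
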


\begin{proof}
By properness of $X$, compactness of $\overline{X}$ and the noncompactness
of the isometry group, we can find isometries $g_{n}$ such that $g_{n}x_{0}\rightarrow\xi^{+}\in\partial X$
and $g_{n}^{-1}x_{0}\rightarrow\xi^{-}\in\partial X$ as $n\rightarrow\infty$.
Since $F$ is finite, by passing to a finite index subgroup, which
does not affect the noncompactness, we may assume that the group fixes
the elements of $F$ pointwise. Any point outside the two stars associated
to $\xi^{\pm}$ must be contracted to these stars according to the
contraction lemma. Such a point cannot be fixed, hence $F\subset S(\xi^{+})\cup S(\xi^{-})$.
\end{proof}

\section{Geodesics and boundary estimates\label{sec:Geodesics-and-boundary}}

Let us begin by the following simple observation:
\begin{prop}
\label{prop:geodesic}Let $X$ be a proper metric space with compactification
$\overline{X}$. To any geodesic ray $\gamma$ there is an associated
face of $\partial X$ being the non-empty intersection of all the
stars which contain limit points of $\gamma(t)$ as $t\rightarrow\infty$.
In particular, all limit points of $\gamma(t)$ are contained in this
face at infinity.
\end{prop}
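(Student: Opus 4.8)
The plan is to base everything at $x_{0}=\gamma(0)$ and to parametrize $\gamma$ by arc length, so that $d(\gamma(t),x_{0})=t$ and, for $0\le s\le t$, $d(\gamma(t),\gamma(s))=t-s$. Write $L\subseteq\overline{X}$ for the set of subsequential limits of $\gamma(t)$ as $t\to\infty$. First I would dispose of the routine topological points: $L\neq\emptyset$ because $\overline{X}$ is compact, and $L\subseteq\partial X$ because $X$ is proper. Indeed, every closed ball $\bar{B}(x_{0},R)$ is compact, the ray eventually leaves it since $d(\gamma(t),x_{0})=t\to\infty$, and a point of $i(X)$ possesses a neighborhood on which $d(\cdot,x_{0})$ is bounded (here one uses that a locally compact dense subspace of a compact Hausdorff space is open, so $i(X)$ is open in $\overline{X}$). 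Thus $L$ is a non-empty compact subset of $\partial X$.

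The heart of the matter is the claim that $\eta\in S(\xi)$ for \emph{every} pair $\eta,\xi\in L$; granting this, every limit point lies in every star $S(\xi)$ with $\xi\in L$. Fix such $\eta,\xi$ and a neighborhood $V\in\mathcal{V}_{\xi}$. Since $\xi$ is a limit point of the ray and $V$ is open, $\gamma$ enters $V$ at some finite parameter, say $\gamma(s_{0})\in V\cap X$. The key elementary observation is that for every $t\ge s_{0}$,
\[
d(\gamma(t),V)\le d(\gamma(t),\gamma(s_{0}))=t-s_{0}\le t=d(\gamma(t),x_{0}),
\]
so that $\gamma(t)\in H(V)$ for all $t\ge s_{0}$. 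Choosing $t_{n}\to\infty$ with $\gamma(t_{n})\to\eta$, the points $\gamma(t_{n})$ lie in $H(V)$ for $n$ large, whence $\eta\in\overline{H(V)}$. As $V\in\mathcal{V}_{\xi}$ was arbitrary, $\eta\in\bigcap_{V}\overline{H(V)}=S^{x_{0}}(\xi)\subseteq S(\xi)$.

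Finally I would assemble the conclusion. The previous step shows $L\subseteq S(\xi)$ for each $\xi\in L$, hence $L\subseteq\mathcal{F}:=\bigcap_{\xi\in L}S(\xi)$; since $L\neq\emptyset$ this intersection is non-empty, and it is therefore a face in the sense defined above, built from precisely the stars $S(\xi)$ that contain the limit point $\xi$. It contains every limit point of $\gamma$, as claimed. I expect the only real obstacle to be the bookkeeping of the first paragraph, namely verifying $L\subseteq\partial X$ for a general abstract compactification, where properness must be invoked to exclude interior limit points; the metric engine of the proof, by contrast, is just the one-line inequality above, which needs nothing beyond the defining property of a geodesic ray and the definition of the halfspace $H(V)$. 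Under the additional hypothesis that $\overline{X}$ is first countable one could instead read the membership $\eta\in S(\xi)$ directly off the Duchin--Fisher sequence criterion, taking $x_{n}=\gamma(s_{n})\to\xi$ and $y_{n}=\gamma(t_{n})\to\eta$ with $s_{n}\le t_{n}$ and $C=0$; the direct argument above avoids that hypothesis.
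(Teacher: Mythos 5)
Your proof is correct and takes essentially the same route as the paper's: the identical geodesic-plus-triangle inequality shows the tail of the ray lies in the halfspace attached to any neighborhood of any limit point, so every limit point belongs to every other limit point's star. The only cosmetic difference is that you normalize the base point to $\gamma(0)$ to get the constant $C=0$ (landing in $S^{x_0}(\xi)$ directly), whereas the paper keeps a general $x_0$ and absorbs $C=d(x_0,\gamma(0))$ into the definition of $S(\xi)$.
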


\begin{proof}
Since the space is proper, any geodesic ray only accumulate at the
boundary. Take any two limit points $\gamma(n_{i})\rightarrow\xi$
and $\gamma(k_{j})\rightarrow\eta$. For any $n_{i}>k_{j}$ we have
\[
d(\gamma(n_{i}),\gamma(k_{j}))=n_{i}-k_{j}<d(\gamma(n_{i}),\gamma(0))\leq d(\gamma(n_{i}),x_{0})+d(x_{0},\gamma(0)).
\]
This implies that $\gamma\in S(\eta)$ since $\gamma(n_{i})$ stays
closer to each neighborhood of $\eta$ up to the constant $C=d(x_{0},\gamma(0))$.
since the two limit points were arbitrary, this shows that any limit
point belongs to the star of any other limit point. Thus this intersection
of stars is non-empty and contains all limit points.
\end{proof}
Recall that any geodesic ray defines a Busemann function thus converges
to a boundary point in this compactification. Related to Question
A we have:

\noun{Question F: }\emph{When do Kobayashi geodesic rays converge
to a boundary point in bounded complex domains?}

Partial results follow from works such as \cite{BGZ21,AFGG22} and
earlier papers that identify the natural extrinsic boundary with the
Gromov boundary, since geodesic rays always converge in the latter
boundary. Note that drawing from the analogy with Hilbert's metric
on convex domains (discussed by Vesentini in \cite{V76}) the paper
\cite{FK05} suggests that it could be a more general phenomenon since
it is shown there that Hilbert geodesic rays always converge even
for general convex domains that often are not Gromov hyperbolic.

I think this is related to questions of extending biholomorphisms
$f:X\rightarrow Y$ to the boundaries. Since Kobayashi geodesic rays
are mapped to Kobayashi geodesic rays, and if these, say emanating
from $x_{0}$, are in bijective correspondence with boundary points,
then there is such an extension. In Mostow's work in higher rank he
obtained incidence preserving boundary maps. So even when there are
no well-defined boundary maps one could formulate a vague, more general
question in this direction:

\noun{Question G: }\emph{Are there results of the type that biholomorphims
or proper holomorphic maps induce maps between the face lattices of
the boundaries?}

Some trivial examples and for more discussion of this in the metric
setting, see \cite{K05}. Obviously it will depend on the boundaries,
and one optimistic possibility could be that if one takes the metric
compactification of the domain space, then it would map to the boundary
(or its faces if the boundary is too large) of the range space. Some
interesting results and insightful discussion of related type can
be found in Bracci-Gaussier's papers \cite{BG20,BG22}. In view of
the these papers another question is:

\noun{Question H: }\emph{What are the relations between stars and
the intersection of horoballs with the boundary?}

The following relation is simple for the metric compactification (I
emphasize that we are here primarily discussing horoballs in the above
sense, while there are also the more general notions of Abate's small
and large horoballs defined in 1988 that have been of importance in
complex geometry since then, see \cite{A91}): 
\begin{prop}
\label{prop:horoball}Let $\partial X$ be the metric boundary of
a proper metric space. Let $\mathcal{H}_{\xi}$ be a horoball centered
at $\xi\in\partial X.$ Then 
\[
\overline{\mathcal{H}_{\xi}}\cap\partial X\subset S^{\vee}(\xi).
\]
\end{prop}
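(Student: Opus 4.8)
The plan is to peel back the definition of the dual star and verify the membership $\xi\in S(\eta)$ directly for every $\eta\in\overline{\mathcal{H}_{\xi}}\cap\partial X$, since this is precisely what $\eta\in S^{\vee}(\xi)$ asserts. First I would fix the working descriptions: the horofunction is $h_{\xi}(z)=\lim_{m}\left[d(z,x_{m})-d(x_{0},x_{m})\right]$ for some approximating sequence $x_{m}\rightarrow\xi$ in the metric compactification, and the horoball is a sublevel set $\mathcal{H}_{\xi}=\{z\in X:h_{\xi}(z)<r\}$. The crucial elementary observation is that $h_{\xi}$ is bounded above by $r$ on all of $\mathcal{H}_{\xi}$, so a single nonnegative constant $C:=\max(r,0)$ will serve uniformly; this is exactly what is needed, because in $S(\eta)=\overline{\bigcup_{C\geq0}\bigcap_{V}\overline{H(V,C)}}$ one fixed $C$ must control every neighborhood of $\eta$ at once.

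Next I would fix an arbitrary $V\in\mathcal{V}_{\eta}$ and produce a witness. Because $\eta$ lies in the closure of $\mathcal{H}_{\xi}\subset X$, the horoball meets $V$, so I may choose $w\in\mathcal{H}_{\xi}\cap V\cap X$, and then $h_{\xi}(w)<r\leq C$. Now I test the sequence $x_{m}\rightarrow\xi$ against the halfspace $H(V,C)$: passing to the limit in $h_{\xi}(w)=\lim_{m}\left[d(w,x_{m})-d(x_{0},x_{m})\right]<C$ gives, for all large $m$, the chain $d(x_{m},V)\leq d(x_{m},w)\leq d(x_{m},x_{0})+C$, that is, $x_{m}\in H(V,C)$. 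Since $x_{m}\rightarrow\xi$ and each such $x_{m}$ lies in $H(V,C)$, I conclude $\xi\in\overline{H(V,C)}$.

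As $V\in\mathcal{V}_{\eta}$ was arbitrary while $C$ did not depend on $V$, this yields $\xi\in\bigcap_{V\in\mathcal{V}_{\eta}}\overline{H(V,C)}\subset S(\eta)$, which is the assertion $\eta\in S^{\vee}(\xi)$ and proves the proposition. (The same computation feeds verbatim into the Duchin--Fisher sequence criterion, using $x_{m}\rightarrow\xi$ together with the witnesses $w\rightarrow U$ as $V$ shrinks and the uniform constant $C$; first countability of the proper metric compactification makes that route equally available.)

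The step I expect to be the main obstacle is the bookkeeping of quantifiers rather than any sharp estimate: one must take the witness $w$ genuinely in $X$ so that $d(x_{m},w)$ and $d(x_{m},V)$ are defined, and one must see that a single radius $r$ --- not an $\varepsilon$ that degenerates as $V$ contracts around $\eta$ --- bounds all the halfspaces simultaneously, which is what places $\xi$ in the intersection over all $V$. The only genuine analytic input is the continuity of the horofunction and the passage to the limit $m\rightarrow\infty$ in its defining formula, with properness guaranteeing the metric compactification is well behaved; replacing an open horoball by a closed one costs at most an arbitrarily small increase of $C$ and does not affect the argument.
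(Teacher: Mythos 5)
Your argument is correct and is essentially the proof in the paper: both translate the horoball bound $h_{\xi}(\cdot)\leq C$ into the halfspace inequality $d(x_{m},w)\leq d(x_{m},x_{0})+C'$ for an approximating sequence $x_{m}\rightarrow\xi$, and then use points of $\mathcal{H}_{\xi}$ near $\eta$ as witnesses to place $\xi$ in $\overline{H(V,C')}$ for every neighborhood $V$ of $\eta$, hence $\xi\in S(\eta)$. The only cosmetic difference is that the paper runs the argument along a sequence $y_{k}\rightarrow\eta$ inside the horoball rather than picking one witness per neighborhood, and allows itself the harmless slack $C'>C$ where you use $C=\max(r,0)$ directly.
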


\begin{proof}
Let $x_{n}\rightarrow h$ in the metric compactification, which means
that 
\[
h(y)=\lim_{n\rightarrow\infty}d(y,x_{n})-d(x_{0},x_{n}).
\]
Suppose that a sequence $y_{k}$ belongs to the fixed horoball $\mathcal{H}_{\xi}$,
which means that for some $C$ and all $k$
\[
C\geq h(y_{k})=\lim_{n\rightarrow\infty}d(y_{k},x_{n})-d(x_{0},x_{n}).
\]
This implies that for any $C'>C$ and any $k$ there is an $N$ such
that $d(x_{n},y_{k})\leq d(x_{n},x_{0})+C'$ for all $n>N$. From
the definitions we then have that for any limit point $\eta$ of the
sequence $y_{k}$, it holds that $\xi\in S(\eta)$. 
\end{proof}
The\emph{ visibility property }of a compactification has its origin
from Eberlein-O'Neill in nonpositive curvature and has recently entered
into complex analysis in significant ways, see for example \cite{BZ17,BM21,BNT22}
for more discussion. One definition is as follows: for any two boundary
points $\xi$ and $\eta$ there are disjoint closed neighborhoods
$V_{\xi}$ and $V_{\eta}$ and a compact set $K$ such that any geodesic
segment connecting $V_{\xi}$ and $V_{\eta}$ must also meet $K$,
alternatively formulated, there is a bound on the distance from $x_{0}$
to each such geodesics. Real hyperbolic spaces have this property
while Euclidean spaces do not have it, in their standard visual (=metric)
compactifications. In this context the following is immediate:
\begin{prop}
\label{prop:visib}Assume that $X$ is a geodesic space which means
that every two points can be connected by a geodesic segment. Suppose
that for two distinct boundary points, there are disjoint neighborhoods
of them such that all geodesics connecting these neighborhoods have
bounded distance to $x_{0}.$ Then the two stars are disjoint.
\end{prop}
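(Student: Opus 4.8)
The plan is to recast the visibility hypothesis as a uniform bound on Gromov products and then show that a boundary point lying in both stars would manufacture a geodesic between the two neighborhoods that escapes every bounded region. First I would record the elementary inequality already implicit in the proof of Proposition \ref{prop:geodesic}: for any geodesic segment $[a,b]$ and any $p\in[a,b]$, summing $d(x_0,a)\le d(x_0,p)+d(p,a)$ and $d(x_0,b)\le d(x_0,p)+d(p,b)$ and using $d(p,a)+d(p,b)=d(a,b)$ yields $d(x_0,p)\ge (a\mid b)_{x_0}$, where $(a\mid b)_{x_0}:=\tfrac12\big(d(x_0,a)+d(x_0,b)-d(a,b)\big)$. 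Consequently the hypothesis that every geodesic joining the disjoint neighborhoods $V_\xi$ and $V_\eta$ stays within distance $R$ of $x_0$ is equivalent to the uniform bound $(a\mid b)_{x_0}\le R$ for all $a\in V_\xi\cap X$ and $b\in V_\eta\cap X$.

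Next I would assume for contradiction that some $\zeta\in\partial X$ lies in $S(\xi)\cap S(\eta)$ and feed this into the sequence criterion of Duchin--Fisher (using first countability of $\overline X$). Membership $\zeta\in S(\xi)$ supplies a constant $C$ and sequences $x_n\to\xi$, $y_n\to\zeta$ with $d(y_n,x_n)\le d(y_n,x_0)+C$; rewriting this as a Gromov product gives $(y_n\mid x_n)_{x_0}\ge\tfrac12\big(d(x_0,x_n)-C\big)$, which tends to $+\infty$ since $x_n\to\xi\in\partial X$ forces $d(x_0,x_n)\to\infty$ (the $x_n$ leave every compact subset of $X$). In particular $x_n\in V_\xi$ for large $n$, and by the first step the geodesics $[x_n,y_n]$ recede arbitrarily far from $x_0$, namely $d(x_0,[x_n,y_n])\ge(y_n\mid x_n)_{x_0}\to\infty$.

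The contradiction is then immediate in the favourable configuration: if the neighborhood of $\zeta$ used in the criterion can be chosen inside $V_\eta$ (for instance whenever $\zeta\in\overline{V_\eta}$), the witnesses satisfy $y_n\in V_\eta$, so $[x_n,y_n]$ joins $V_\xi$ to $V_\eta$ while violating $(a\mid b)_{x_0}\le R$. \textbf{The main obstacle} is the general position of $\zeta$: when $\zeta$ is separated from both $V_\xi$ and $V_\eta$ one must combine the two memberships, running the criterion for $\xi$ and for $\eta$ along a common shrinking neighbourhood $U\ni\zeta$ to obtain $x_n\in V_\xi$, $x'_n\in V_\eta$ and witnesses $y_n,y'_n\in U$, and then bound $(x_n\mid x'_n)_{x_0}$ from below. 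Expanding the products gives $(x_n\mid x'_n)_{x_0}\ge (x_n\mid y_n)_{x_0}+(x'_n\mid y'_n)_{x_0}-\tfrac12\big(d(x_0,y_n)+d(x_0,y'_n)+d(y_n,y'_n)\big)$, so the task reduces to keeping the witness distances $d(x_0,y_n),d(x_0,y'_n),d(y_n,y'_n)$ small relative to the two large products. This is exactly the point where, absent a hyperbolicity assumption to chain Gromov products, one must exploit the freedom in the choice of witnesses — taking them near $\inf_{U\cap X}d(x_0,\cdot)$ with $U$ shrinking — and controlling the relative growth rates of the sequences is the crux; once $(x_n\mid x'_n)_{x_0}>R$ is secured, the geodesic $[x_n,x'_n]$ between $V_\xi$ and $V_\eta$ contradicts the visibility bound and the proof is complete.
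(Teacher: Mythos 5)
Your first two paragraphs are correct and, in substance, coincide with the paper's argument: the visibility hypothesis forces $(a\mid b)_{x_0}\le R$ for all $a\in V_\xi\cap X$ and $b\in V_\eta\cap X$, while $\eta\in S(\xi)$ would produce $x_n\in V_\xi$ and witnesses $y_n\in V_\eta$ with $(x_n\mid y_n)_{x_0}\to\infty$; hence $\eta\notin S(\xi)$ and, symmetrically, $\xi\notin S(\eta)$ (indeed $S(\xi)\cap V_\eta=\emptyset$ and $S(\eta)\cap V_\xi=\emptyset$). The paper gets this slightly more directly from the definition of the halfspaces, with no sequence criterion: given $C$, shrink the neighborhood $V\ni\xi$ until every $w\in V\cap X$ has $d(x_0,w)>2R+C$; then for $z\in V_\eta\cap X$ one gets $d(z,w)\ge d(z,x_0)+d(w,x_0)-2R>d(z,x_0)+C$, so $H(V,C)$ misses $V_\eta$ and therefore $S(\xi)\subset\overline{X}\setminus V_\eta$. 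That route avoids invoking first countability of $\overline{X}$, which the proposition does not assume. One small slip: the geodesic bound is not \emph{equivalent} to the Gromov-product bound --- only $d(x_0,[a,b])\ge(a\mid b)_{x_0}$ holds, not the reverse --- but you use only the implication that is actually true.

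Concerning the ``main obstacle'' you flag: you are right that this argument does not by itself exclude a third point $\zeta\in S(\xi)\cap S(\eta)$, and your expansion $(x_n\mid x'_n)_{x_0}\ge(x_n\mid y_n)_{x_0}+(x'_n\mid y'_n)_{x_0}-\tfrac{1}{2}\bigl(d(x_0,y_n)+d(x_0,y'_n)+d(y_n,y'_n)\bigr)$ is indeed indeterminate, since the subtracted witness terms grow at the same order as the two products. But you should know that the paper's own two-line proof establishes exactly the same pairwise statement and nothing more; it does not address a third accumulation point either. Moreover the pairwise version is all that is used downstream: in Corollary \ref{cor:vis} the visibility hypothesis holds for \emph{every} pair of boundary points, so pairwise non-membership already yields $S(\xi)=\{\xi\}$ for all $\xi$, after which disjointness of stars is automatic. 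So your proposal proves everything the paper's proof proves, by essentially the same mechanism, and your final paragraph is an honest (and correct) observation that the literal claim ``the two stars are disjoint'' for a single visible pair requires an argument that neither you nor the paper supplies.
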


\begin{proof}
The assumption means that the distance between points near $\xi$
and points near $\eta$ is up to a bounded amount the sum of the respective
distance to $x_{0}.$ The conclusion now follows from the definition
of stars: points near one of the boundary point will eventually all
lie outside the halfspaces around the other. 
\end{proof}
If all stars are disjoint, then we must have that $S(\xi)=\left\{ \xi\right\} $
for every $\xi\in\partial X$, and I call such compactifications hyperbolic
as mentioned above. 
\begin{cor}
\label{cor:vis}If a compactification of a geodesic space has the
visibility property, then it is a hyperbolic compactification.
\end{cor}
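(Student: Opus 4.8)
The plan is to read the corollary off from Proposition \ref{prop:visib} together with the remark that immediately precedes it. First I would observe that the visibility property is exactly a uniform, every-pair version of the hypothesis of Proposition \ref{prop:visib}. Visibility asserts that for \emph{any} two distinct boundary points $\xi,\eta$ there are disjoint closed neighborhoods $V_\xi,V_\eta$ and a compact set $K$ that every geodesic joining $V_\xi$ and $V_\eta$ must meet; and meeting a fixed compact set is the same as staying within a bounded distance of $x_0$, since if a geodesic $\gamma$ meets $K$ then $\inf_{w\in\gamma}d(x_0,w)\leq\sup_{z\in K}d(x_0,z)$, the latter being finite by compactness of $K$. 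Hence the hypothesis of Proposition \ref{prop:visib} holds for every pair of distinct boundary points, and the proposition yields $S(\xi)\cap S(\eta)=\emptyset$ whenever $\xi\neq\eta$ in $\partial X$.

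Second, I would upgrade this pairwise disjointness of all stars to the singleton statement $S(\xi)=\{\xi\}$. Fix $\xi\in\partial X$; we always have $\xi\in S(\xi)$, so only the reverse inclusion needs proof. If some boundary point $\eta\neq\xi$ belonged to $S(\xi)$, then since $\eta\in S(\eta)$ as well we would have $\eta\in S(\xi)\cap S(\eta)$, contradicting the disjointness from the first step; hence $S(\xi)\cap\partial X=\{\xi\}$. This is precisely the implication already flagged in the sentence ``if all stars are disjoint, then $S(\xi)=\{\xi\}$,'' and once it holds for every $\xi$ the compactification is hyperbolic by definition.

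The only point requiring a little care — and the step I would treat as the genuine, if modest, obstacle — is ruling out interior points of $X$ from $S(\xi)$, so that the singleton statement holds in $\overline{X}$ and not merely on $\partial X$. Here I would unwind the definition of the star: for a fixed $z\in X$ and fixed $C\geq 0$ one has $d(z,V)\to\infty$ as the neighborhood $V$ of $\xi$ shrinks, while $d(z,x_0)$ remains fixed, so $z\notin H(V,C)$ once $V$ is small, whence $z\notin\bigcap_V\overline{H(V,C)}$ for every $C$ and $z$ is absent from the union over $C$. The fact that $d(z,V)\to\infty$ is where the geometry enters: it holds because no closed ball about $z$ accumulates at the boundary point $\xi$, which is automatic when $X$ is proper since then closed balls are compact hence closed in $\overline{X}$. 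Finally, since $i(X)$ is open in $\overline{X}$ (a locally compact dense subspace is open in its Hausdorff compactification), the outer closure in the definition of $S(\xi)$ cannot introduce interior points into a set of boundary points. Therefore $S(\xi)=\{\xi\}$ for every $\xi$, and the compactification is hyperbolic.
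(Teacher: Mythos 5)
Your argument is correct and is exactly the route the paper takes: the corollary is left without a separate proof precisely because it is the remark immediately preceding it (pairwise disjoint stars force $S(\xi)=\{\xi\}$) combined with Proposition \ref{prop:visib}, whose hypothesis is supplied for every pair of distinct boundary points by the visibility property, with the equivalence between meeting a compact set and staying at bounded distance from $x_{0}$ already noted in the paper's definition of visibility. Your third paragraph, which rules out points of $X$ itself from $S(\xi)$, is a point the paper silently elides; be aware only that your argument there invokes properness of $X$, which is not among the corollary's stated hypotheses.
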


Since it seems not clear when Kobayashi domains are geodesic spaces,
Bharali and Zimmer, see \cite{BZ17,BM21}, defined a weaker notion
of visibility (see also these papers for a wealth of examples). Let
$X$ be a bounded domain in $\mathbb{C}^{N}$ with its associated
Kobayashi distance $d$. Fix some $\kappa>0$, by \cite[Proposition 4.4]{BZ17}
any two points in $X$ can be joined by a $(1,\kappa)$-almost geodesic
which means a path $\sigma:I\rightarrow X$ such that 
\[
\left|t-s\right|-\kappa\leq d(\sigma(t),\sigma(s))\leq\left|t-s\right|+\kappa
\]
for all $t,s\in I$. Let $\overline{X}$ be the closure $X$ above
referred to as the natural extrinsic compactification of $X$. We
say that $X$ is a \emph{visibility domain }if for any two distinct
boundary points $\xi$ and $\eta$ and neighborhoods $V$ and $W$
in $\overline{X}$ of these two points such that $\overline{V}\cap\overline{W}=\emptyset$,
there exists a compact set $K$ in $X$ such that for any $x\in V\cap X$
and $y\in W\cap X$ and any $(1,\kappa)$-almost geodesic $\sigma$joining
these two points, $\sigma$intersects $K$. 
\begin{thm}
\label{thm:vishyp}Let $X$ be a bounded domain in $\mathbb{C}^{N}$
and $\overline{X}$ its closure. Assume that it is a visibility domain
for the Kobayashi distance. Then
\[
S(\xi)=\left\{ \xi\right\} 
\]
 for every $\xi\in\partial X=\overline{X}\setminus X$.
\end{thm}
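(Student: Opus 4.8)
The plan is to adapt the proof of Proposition \ref{prop:visib} to the weaker almost-geodesic setting supplied by the Bharali--Zimmer visibility hypothesis. The goal is to show $S(\xi)=\{\xi\}$ for every $\xi\in\partial X$, which by the definition of the star amounts to showing that no boundary point $\eta\neq\xi$ can lie in $S(\xi)$. I would argue by contradiction: suppose $\eta\in S(\xi)$ with $\eta\neq\xi$. Because the natural extrinsic compactification $\overline X$ is metrizable (it is a subset of $\mathbb{C}^N$), it is first countable, so I can invoke the Duchin--Fisher sequence criterion quoted just above. That criterion tells me that membership $\eta\in S(\xi)$ produces sequences $x_n\to\xi$ and $y_n\to\eta$ (taking $U$ to shrink to $\eta$) together with a constant $C\geq 0$ such that
\[
d(y_n,x_n)\leq d(y_n,x_0)+C .
\]

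The key geometric step is to contradict this inequality using visibility. First I would choose disjoint neighborhoods $V$ of $\xi$ and $W$ of $\eta$ with $\overline V\cap\overline W=\emptyset$; for large $n$ we have $x_n\in V$ and $y_n\in W$. By \cite[Proposition 4.4]{BZ17} fix $\kappa>0$ and join each pair $x_n,y_n$ by a $(1,\kappa)$-almost geodesic $\sigma_n$. Visibility then furnishes a fixed compact set $K\subset X$ that every $\sigma_n$ must meet, say at a point $p_n\in K$. The plan is to split the almost-geodesic at $p_n$ and use the near-additivity of length along $\sigma_n$: since $p_n$ lies on the almost geodesic between $x_n$ and $y_n$, the almost-geodesic property gives, up to additive errors controlled by $\kappa$, that
\[
d(x_n,y_n)\geq d(x_n,p_n)+d(p_n,y_n)-c\kappa
\]
for an absolute constant $c$. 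Because $p_n$ ranges in the fixed compact set $K$, the quantities $d(p_n,x_0)$ are uniformly bounded, so $d(x_n,p_n)\geq d(x_n,x_0)-D$ and $d(y_n,p_n)\geq d(y_n,x_0)-D$ for some constant $D$ depending only on $K$ and $x_0$.

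Combining these estimates yields
\[
d(x_n,y_n)\geq d(x_n,x_0)+d(y_n,x_0)-D',
\]
where $D'$ absorbs $D$ and the $\kappa$-errors. Since $x_n\to\xi\in\partial X$ and $X$ is bounded with the Kobayashi distance complete-to-infinity on such domains, $d(x_n,x_0)\to\infty$; in particular $d(x_n,x_0)$ is eventually larger than $C+D'$. Feeding this into the chain of inequalities gives $d(y_n,x_n)\geq d(y_n,x_0)+(d(x_n,x_0)-D')$, which for large $n$ forces $d(y_n,x_n)>d(y_n,x_0)+C$, contradicting the inequality from the sequence criterion. Hence no such $\eta$ exists and $S(\xi)=\{\xi\}$.

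The step I expect to be the main obstacle is the bookkeeping in the near-additivity estimate along the almost geodesic, namely making the splitting $d(x_n,y_n)\gtrsim d(x_n,p_n)+d(p_n,y_n)$ rigorous. A genuine geodesic would give exact additivity, but a $(1,\kappa)$-almost geodesic only satisfies the two-sided bound $|t-s|-\kappa\leq d(\sigma_n(t),\sigma_n(s))\leq |t-s|+\kappa$, so one must express the distances in terms of the parameter values and then reconvert, carefully tracking which direction each $\kappa$-error pushes. One subtlety is ensuring $p_n$ really lies \emph{between} $x_n$ and $y_n$ in parameter, rather than on an excursion of $\sigma_n$, but this is automatic since $\sigma_n$ is parametrized on an interval $I$ with endpoints mapping near $x_n$ and $y_n$ and $p_n=\sigma_n(t_n)$ for some interior $t_n$. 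A secondary technical point is confirming that $d(x_n,x_0)\to\infty$; this uses that $\xi$ is a genuine boundary point and that the Kobayashi distance on a bounded domain blows up as one approaches $\partial X$, which is standard for the class of domains to which the visibility hypothesis applies.
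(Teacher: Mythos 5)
Your argument is essentially the paper's intended proof: the text justifies Theorem \ref{thm:vishyp} precisely as a rerun of Proposition \ref{prop:visib} with the $(1,\kappa)$-almost geodesics of \cite[Proposition 4.4]{BZ17}, and your near-additivity bookkeeping (splitting at the point $p_n$ in the compact set $K$, with total error $3\kappa$) is exactly the computation carried out in the proof of Theorem \ref{thm:randomvisible}. The one ingredient to keep explicit is the point you already flag, that $d(x_n,x_0)\to\infty$ as $x_n\to\xi\in\partial X$ (otherwise $H(V,C)$ would be all of $X$ for small $V$ and suitable $C$, making the conclusion false); this is not a consequence of boundedness alone but holds for the domains to which the visibility hypothesis is applied, and for your purposes a single fixed pair of disjoint neighborhoods $V,W$ in the sequence criterion suffices, so there is no need to let $U$ shrink to $\eta$ with a uniform constant.
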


The proof is a minor adaptation of Proposition \ref{prop:visib} in
view of \cite[Proposition 4.4]{BZ17}. See also the proof of Theorem
\ref{thm:randomvisible} below.

\noun{Question I: }\emph{Are there in some cases precise relations
between visibility and boundary points being hyperbolic? Are hyperbolic
compactifications a larger class than visibility compactifications?}

This is of interest in the Wolff-Denjoy context discussed below. Here
is a way to get visibility and hyperbolic points from estimates for
the Kobayashi distances, taken from \cite{K05}: 
\begin{thm}
\label{thm:visible}(\cite[Theorem 37]{K05}) Let $X$ be a bounded
$C^{2}$-smooth domain in $\mathbb{C}^{N}$ which is complete in the
Kobayashi metric. Assume that for the infinitesimal Kobayashi metric
$K_{X}(z;v)$ there are some constants $\epsilon>0$ and $c>0$ such
that
\[
K_{X}(z;v)\geq c\frac{\left\Vert v\right\Vert }{\delta(z,\partial X)^{\epsilon}}
\]
for all $z\in X$ and $v\in\mathbb{C}^{N}$, where $\left\Vert \cdot\right\Vert $
and $\delta$ refer to the Euclidean norm and distance respectively.
Then $S(\xi)=\left\{ \xi\right\} $ for every $\xi\in\partial X$
and the compactification has the visibility property.
\end{thm}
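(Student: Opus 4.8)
The plan is to reduce the theorem to the visibility property established earlier and then supply the analytic estimate that forces that property. First I would record that the Kobayashi distance on $X$ is the inner (length) distance associated with the infinitesimal metric $K_X$, so that completeness together with local compactness lets Hopf--Rinow apply: $(X,d)$ is a proper geodesic space. Granting this, it suffices by Proposition \ref{prop:visib} and Corollary \ref{cor:vis} to prove the geodesic visibility property --- that for distinct $\xi,\eta\in\partial X$ with disjoint closed neighborhoods $V_\xi,V_\eta$ every connecting geodesic stays within a fixed compact subset of $X$ (equivalently, has bounded distance to $x_0$) --- since Corollary \ref{cor:vis} then yields $S(\xi)=\{\xi\}$ for all $\xi$ at once, which is the remaining assertion.

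Two metric inputs drive the estimate. On the one hand the standard comparison $K_X(z;v)\le \|v\|/\delta(z,\partial X)$, coming from monotonicity of the Kobayashi metric under the inclusion $B(z,\delta(z,\partial X))\subset X$, bounds the cost of moving radially inward and shows incidentally that the hypothesis forces $\epsilon\le 1$. On the other hand, integrating the assumed lower bound shows that any rectifiable path confined to the tube $\{\,\delta(\cdot,\partial X)\le\rho\,\}$ has Kobayashi length at least $(c/\rho^{\epsilon})$ times its Euclidean length; applied to a path that must traverse the positive Euclidean gap $\delta_0$ between the two disjoint neighborhoods, this gives a transverse lower bound growing like $c\delta_0/\rho^{\epsilon}$ as $\rho\to 0$. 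Properness supplies the needed translation between the two notions of ``going to the boundary'': $\sup\{\delta(z,\partial X):d(x_0,z)\ge R\}\to 0$ as $R\to\infty$.

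The core is a Gehring--Hayman type comparison. Suppose visibility fails, so there are geodesics $\gamma_n$ joining $p_n\in V_\xi$ to $q_n\in V_\eta$ with $d(x_0,\gamma_n)\to\infty$; by properness the most interior point of $\gamma_n$ has depth $\rho_n^\ast=\max_t\delta(\gamma_n(t),\partial X)\to 0$, so $\gamma_n$ is trapped in an ever thinner tube and its length is at least $c\delta_0/(\rho_n^\ast)^{\epsilon}$. Against this I would test the competitor that rises radially from $p_n$ and $q_n$ to a fixed depth $r_0$, crosses the relatively compact shell $\{\,\delta(\cdot,\partial X)\ge r_0\,\}$ of finite Kobayashi diameter $D(r_0)$, and descends again: by the upper comparison its two radial legs cost at most a logarithm of the endpoint depths, while its transverse leg costs only $D(r_0)$. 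Choosing $r_0$ so that the power-law transverse cost of staying in the tube exceeds the logarithmic cost of rising to depth $r_0$ shows the competitor is strictly shorter once $\rho_n^\ast$ is small, contradicting minimality of $\gamma_n$. Hence the apex depth is bounded below by a constant depending only on $V_\xi,V_\eta$, the geodesics meet the fixed compact set $\{\,\delta(\cdot,\partial X)\ge r_0\,\}$, and visibility holds; the passage to $(1,\kappa)$-almost geodesics only inserts the additive constant $\kappa$ throughout.

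The main obstacle is making this comparison uniform over all admissible endpoints, in particular over endpoints that sit far deeper in the boundary tube than the apex of the geodesic. There the radial legs of both the geodesic and the competitor are long, and one must confront the mismatch between the lower bound (with constant $c$) and the upper bound (with constant $1$) along these nearly radial segments; one reconciles them with the depth-change inequality $d(z,w)\ge \tfrac{c}{1-\epsilon}\,|\delta(z,\partial X)^{1-\epsilon}-\delta(w,\partial X)^{1-\epsilon}|$ (and its logarithmic form when $\epsilon=1$), which shows the unavoidable vertical cost is already of the competitor's order, so that the genuine gain comes entirely from performing the transverse travel at fixed depth rather than in the thin tube. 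Verifying that this gain dominates uniformly --- equivalently, that the optimal apex depth is bounded below independently of how closely $p_n,q_n$ approach the boundary --- is the quantitative heart of the proof, and it is exactly where the exponent $\epsilon$ and the completeness hypothesis enter.
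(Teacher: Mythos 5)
The paper itself offers no proof of this theorem --- it is quoted verbatim from \cite[Theorem 37]{K05} --- so your argument has to be measured against the estimate-based proof given there (essentially the same mechanism as Bharali--Zimmer's visibility proof for Goldilocks domains, of which these domains are examples). Your reduction to Proposition \ref{prop:visib} and Corollary \ref{cor:vis} via Royden and Hopf--Rinow is fine, as are your two metric inputs and the observation that $\epsilon\le 1$. The gap sits exactly at what you call the ``quantitative heart'', and it is not a detail that can be deferred: the competitor comparison cannot be made uniform, and your proposed fix rests on a false claim. For $\epsilon<1$ the depth-change inequality bounds the vertical cost of the trapped geodesic by $\frac{c}{1-\epsilon}\,\mathrm{diam}(X)^{1-\epsilon}$, i.e.\ it is \emph{bounded}, whereas the radial legs of your competitor cost on the order of $\log(1/\delta(p_n))+\log(1/\delta(q_n))$, which is unbounded; these are not ``of the competitor's order''. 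Concretely, all your inequalities give for a geodesic trapped at depth $\le\rho^{\ast}$ is the lower bound $c\delta_{0}/(\rho^{\ast})^{\epsilon}+O(1)$ against the upper bound $d(p_n,q_n)\le\frac{1}{2}\log\frac{1}{\delta(p_n)\delta(q_n)}+O(1)$; taking $\delta(p_n)=\delta(q_n)=\exp(-(\rho^{\ast})^{-2\epsilon})$ shows these are compatible for every small $\rho^{\ast}$, so no uniform lower bound on the apex depth follows and the argument cannot close as designed.

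The proof that works trades the competitor path for an integral estimate along the geodesic itself. $C^{2}$-smoothness gives the interior-ball upper bound $d(x_{0},z)\le C_{1}-\frac{1}{2}\log\delta(z,\partial X)$, hence $\delta(\gamma(t),\partial X)\le Ce^{-2d(x_{0},\gamma(t))}$ along a unit-speed geodesic $\gamma$; the hypothesis bounds the Euclidean speed by $\left\Vert \gamma'(t)\right\Vert \le c^{-1}\delta(\gamma(t),\partial X)^{\epsilon}$; and if $M^{\ast}=\min_{t}d(x_{0},\gamma(t))$ is attained at $t^{\ast}$, the geodesic property gives $d(x_{0},\gamma(t))\ge\max(M^{\ast},|t-t^{\ast}|-M^{\ast})$. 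Integrating the speed bound, the Euclidean displacement of $\gamma$ --- at least the gap $\delta_{0}$ between the two disjoint closed neighborhoods --- is at most $c^{-1}C^{\epsilon}e^{-2\epsilon M^{\ast}}(4M^{\ast}+\epsilon^{-1})$, which forces $M^{\ast}\le M_{0}(\delta_{0},c,\epsilon)$: the geodesic meets a fixed closed Kobayashi ball, compact by completeness, no matter how deep its endpoints sit. This disposes of your problematic regime automatically, because what pays for the long tails is the exponential decay of the Euclidean depth in the Kobayashi distance from the apex, not any vertical-cost bookkeeping; the same computation with the additive $\kappa$ covers $(1,\kappa)$-almost-geodesics, and $S(\xi)=\{\xi\}$ then follows as you indicate.
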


The estimate in the assumption of the theorem is established in \cite{Ch92}
for smooth pseudoconvex bounded domains with boundary of finite type
in the sense of D'Angelo. This has subsequently been extended in important
ways, in \cite[Lemma 5]{KT16}, the Goldilocks domain of Bharali-Zimmer
in \cite{BZ17}, and \cite[Theorem 1.5]{BM21}.

\section{Wolff-Denjoy type theorems\label{sec:Wolff-Denjoy-type-theorems}}

An early application of the Schwarz-Pick lemma was found in 1926 seemingly
as a conversation via \emph{Comptes Rendus} of the French Academy
of Sciences between Wolff and Denjoy. It states that any holomorphic
self-map of the unit disk either has a fixed point, or its orbits
converge to a single point in the boundary circle and every horodisk
at that point is an invariant set. Extensions of this has generated
a vast literature, starting with Valiron, Heins, H. Cartan, Hervé,
Vesentini, Abate, Beardon and many others, see \cite{HW21} for references.
Most extensions, assume something like Gromov hyperbolicity or weaker
property (like visibility or strict convexity). The stars will be
used for a weaker conclusion but in a much more general setting. 

I will mention two purely metric versions, one in terms of the metric
compactification and the other one in terms of the stars at infinity
for any given compactfication of interest. 

Let $X$ be a metric space and consider maps $f$ between metric spaces
that are \emph{nonexpansive }in the sense that
\[
d(f(x),f(y))\leq d(x,y)
\]
for all $x,y\in X$. Isometries are important examples and the composition
of nonexpansive maps remains nonexpansive. 

As was remarked in the very beginning Kobayashi provided a functor
from complex spaces and holomorphic maps into psuedo-metric spaces
and nonexpansive maps, thereby constituting a very significant class
of examples. 

One defines the \emph{minimal displacement $d(f)=\inf_{x}d(x,f(x))$
}and the \emph{translation number} $\ensuremath{\tau(f)=\lim_{n\rightarrow\infty}d(x,f^{n}(x))/n}$,
which exists by a well-known subadditivity argument. These numbers
are analogs of the operator norm and spectral radius, respectively,
in particular note that $\tau(f)\leq d(f).$ They have been studied
to some extent in the complex analytic literature, by Arosio, Bracci,
Fiacchi and Zimmer in particular, see \cite{AB16,AFGG22,AFGK22}.

The following, which I think of as a kind of weak spectral theorem
in the metric category \cite{K21b,K18}, can be viewed as a partial
extension of the theorem of Wolff and Denjoy: 
\begin{thm}
\label{thm:metricspectral-1}\emph{(\cite{K01}) }Let $f:X\rightarrow X$
be a nonexpansive map of a metric space $X$. Then there is a metric
functional $h$ such that
\[
h(f^{n}x_{0})\leq-\tau(f)k
\]
for all $n\geq1$, and
\[
\lim_{n\rightarrow\infty}-\frac{1}{n}h(f^{n}x)=\tau(f).
\]
\end{thm}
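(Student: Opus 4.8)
The plan is to prove Theorem \ref{thm:metricspectral-1} by extracting the metric functional $h$ as a limit of the normalized distance functions along the orbit, using a compactness argument in the metric compactification. First I would fix the base point $x_0$ and consider the orbit points $f^n x_0$. The key quantities are the values $d(\cdot, f^n x_0) - d(x_0, f^n x_0)$, which are exactly the metric functionals $h_{f^n x_0}$ defining the embedding $\Phi$ into $\overline{X}^h$. The strategy is to choose a good subsequence $n_k$ along which these functionals converge to some $h \in \overline{X}^h$, where the subsequence is selected so that $\frac{1}{n_k} d(x_0, f^{n_k} x_0) \to \tau(f)$ and, crucially, so that $d(x_0, f^{n_k} x_0) - d(x_0, f^{n_k - j} x_0)$ behaves controllably for each fixed $j$.

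The heart of the argument is the following subadditivity bookkeeping. For the first inequality, I want $h(f^m x_0) \le -\tau(f) m$ for all $m \ge 1$. Writing $h$ as a limit of $h_{f^{n_k} x_0}$, one computes
\[
h(f^m x_0) = \lim_{k} \bigl( d(f^m x_0, f^{n_k} x_0) - d(x_0, f^{n_k} x_0) \bigr).
\]
By nonexpansiveness, $d(f^m x_0, f^{n_k} x_0) \le d(x_0, f^{n_k - m} x_0)$, so the expression is bounded above by $\lim_k \bigl( d(x_0, f^{n_k - m} x_0) - d(x_0, f^{n_k} x_0) \bigr)$. Using $a_n := d(x_0, f^n x_0)$, this is $-\lim_k (a_{n_k} - a_{n_k - m})$. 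The subsequence should be chosen so that these increments are, in the limit, at least $\tau(f) m$; this is the delicate point, and it is precisely where one invokes the subadditivity $a_{n+m} \le a_n + a_m$ together with the fact that $a_n / n \to \tau(f)$. A clean way is to pick $n_k$ so that $a_{n_k}$ is close to its subadditive lower envelope, ensuring backward increments do not fall below $\tau(f)$ per step.

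For the second assertion, the limit $\lim_n -\frac{1}{n} h(f^n x) = \tau(f)$, I would argue that the first inequality already gives $-\frac{1}{n} h(f^n x_0) \ge \tau(f)$, so only the reverse bound $\limsup \le \tau(f)$ remains. This follows because $|h(y)| \le d(y, x_0)$ for any metric functional (they are $1$-Lipschitz and vanish at $x_0$), hence $-h(f^n x) \le d(f^n x, x_0) \le d(f^n x_0, x_0) + 2 d(x, x_0)$, and dividing by $n$ and letting $n \to \infty$ yields $\tau(f)$; the base-point change from $x_0$ to a general $x$ only shifts by a bounded amount that washes out after normalization.

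The main obstacle I expect is the selection of the subsequence $n_k$ so that \emph{both} the convergence $h_{f^{n_k} x_0} \to h$ in $\overline{X}^h$ \emph{and} the lower bound on the backward increments $a_{n_k} - a_{n_k - m} \gtrsim \tau(f) m$ hold simultaneously for every fixed $m$. Compactness of $\overline{X}^h$ delivers a convergent subsequence for free, but it need not respect the increment condition, so the two requirements must be reconciled by a diagonal argument combined with a careful choice of $n_k$ minimizing the subadditive defect (the kind of argument underlying the standard proof that $\tau(f) = \lim a_n/n$ exists). Handling the increments uniformly in $m$ while only having asymptotic control of $a_n/n$ is the technical crux; everything else is the routine Lipschitz and subadditivity estimates sketched above.
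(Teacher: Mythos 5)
Your architecture is the right one and matches the proof in \cite{K01} (note that the paper only cites this theorem, it does not reprove it): embed the orbit via $\Phi$, extract a limit point $h$ of the functionals $h_{f^{n}x_{0}}$ along carefully chosen times, bound $h(f^{m}x_{0})$ by backward increments of $a_{n}=d(x_{0},f^{n}x_{0})$ using nonexpansiveness, and deduce the second assertion from the first together with the fact that metric functionals are $1$-Lipschitz and vanish at $x_{0}$; that last reduction is complete as you state it. The gap sits exactly where you flag the crux: you never actually produce the times $n_{k}$, and the goal as you literally phrase it --- backward increments that ``do not fall below $\tau(f)$ per step'' --- is not achievable at any fixed time $n$ for a general subadditive sequence; one can only arrange $a_{n}-a_{n-k}\geq(\tau-\epsilon)k$ for a prescribed $\epsilon>0$. ``Picking $n_{k}$ close to the subadditive lower envelope'' is a heuristic, not an argument, and as written the plan stops short of the one step that carries the content of the theorem.

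The missing idea is the record-time lemma (the deterministic case of \cite[Proposition 4.2]{KM99}, which this paper invokes in the proof of Theorem \ref{thm:randomvisible}). Fix $\epsilon>0$ and set $b_{n}=a_{n}-(\tau-\epsilon)n$; since $\tau=\inf_{n}a_{n}/n$ by Fekete, $b_{n}\geq\epsilon n\rightarrow\infty$, so there are infinitely many record times $n$ with $b_{n}>b_{n-k}$ for all $1\leq k\leq n$, and at such $n$ the inequality $a_{n}-a_{n-k}\geq(\tau-\epsilon)k$ holds for \emph{all} $k\leq n$ simultaneously --- this dissolves your worry about uniformity in $m$, since the $\epsilon$ is attached to the time, not to the step. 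Now take $\epsilon_{l}\downarrow0$, choose for each $l$ a record time $N_{l}\geq l$ for $\epsilon_{l}$, and let $h$ be a limit point of $(h_{f^{N_{l}}x_{0}})_{l}$ in $\overline{\Phi(X)}$ (a subnet limit, or a diagonal subsequence on the countably many orbit points; plain sequential compactness is not available in a general metric space, contrary to your ``for free''). For fixed $m$ and all large $l$,
\[
h_{f^{N_{l}}x_{0}}(f^{m}x_{0})\leq a_{N_{l}-m}-a_{N_{l}}\leq-(\tau-\epsilon_{l})m,
\]
and since $\epsilon_{l}\rightarrow0$ the limit point satisfies the clean bound $h(f^{m}x_{0})\leq-\tau m$ with no $\epsilon$. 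With this lemma inserted, your outline goes through. (Also, the displayed $-\tau(f)k$ in the statement is a typo for $-\tau(f)n$.)
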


The theorem implies as very special cases, with geometric input specific
in each case, extensions of the Wolff-Denjoy theorem for holomorphic
maps \cite{K01,K05}, von Neumann's mean ergodic theorem \cite{K18,K21b},
and Thurston's spectral theorem for surface homeomorphisms \cite{K14,H16}.
Moreover, it has been applied in non-linear analysis, see e.g. \cite{LN12}
and gave the classification of isometries of Gromov hyperbolic spaces
even when non-locally compact and non-geodesic. It also provided new
information for isometries of Riemannian manifolds. Its proof has
subsequently been used several times in the setting of Denjoy-Wolff
extensions in several complex variables, see \cite{HW21}. Maybe it
can also be useful for pseudo-holomorphic self-maps, see \cite{BC09}.

To illustrate how a metric generalization of the Wolff-Denjoy theorem
can give back to complex geometry in a different way, we show the
following. Extremal length was defined in the introduction. 
\begin{cor}
\label{cor:extremallength}Let $f$ be a holomorphic self-map of the
Teichmüller space of $M$. Then there exists a simple closed curve
$\beta$ such that 
\[
\lim_{n\rightarrow\infty}\mathrm{Ext}_{f^{n}(x)}(\beta)^{1/n}=\lim_{n\rightarrow\infty}\left(\sup_{\alpha}\frac{\mathrm{Ext}_{f^{n}(x)}(\alpha)}{\mathrm{Ext}_{x}(\alpha)}\right)^{1/n},
\]
where the supremum is taken over all simple closed curves on $M$.
\end{cor}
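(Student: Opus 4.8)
The plan is to apply Theorem \ref{thm:metricspectral-1} to the holomorphic self-map $f$ of Teichm\"uller space $T(M)$, using the metric compactification of $T(M)$ in the Teichm\"uller (= Kobayashi) metric. First I would recall Royden's theorem identifying the Teichm\"uller and Kobayashi metrics, so that $f$ is nonexpansive for the Teichm\"uller metric $d$, and hence Theorem \ref{thm:metricspectral-1} furnishes a metric functional $h$ with $-\frac{1}{n}h(f^n(x)) \to \tau(f)$. The essential bridge is Kerckhoff's formula for the Teichm\"uller distance in terms of extremal lengths,
\[
d(x,y) = \tfrac{1}{2}\log \sup_{\alpha} \frac{\mathrm{Ext}_y(\alpha)}{\mathrm{Ext}_x(\alpha)},
\]
the supremum over simple closed curves $\alpha$. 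Substituting this into the definition of $\tau(f)$ immediately identifies the right-hand side of the corollary: indeed $\tau(f) = \lim_n \frac{1}{n} d(x,f^n(x)) = \tfrac{1}{2}\lim_n \frac{1}{n}\log \sup_\alpha \frac{\mathrm{Ext}_{f^n(x)}(\alpha)}{\mathrm{Ext}_x(\alpha)}$, so that the quantity on the right of the claimed identity equals $e^{2\tau(f)}$.

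Next I would identify the metric functionals on $T(M)$ concretely. The key known fact is that every metric functional in the Teichm\"uller metric compactification is, up to the additive normalization coming from the basepoint, of the form $h(y) = \tfrac{1}{2}\log\frac{\mathrm{Ext}_y(\beta)}{\mathrm{Ext}_x(\beta)}$ for a single simple closed curve $\beta$ (more precisely, a limit of such expressions realized by a measured foliation, but the extreme functionals are carried by individual curves). This is exactly the content of the Gardiner--Masur type description of the horofunction boundary, and it is the structural input that lets one pick out one curve $\beta$ rather than a supremum. With such an $h$ in hand, the growth statement $-\frac{1}{n}h(f^n(x)) \to \tau(f)$ from the theorem reads, after exponentiating, as $\lim_n \mathrm{Ext}_{f^n(x)}(\beta)^{1/n} = e^{2\tau(f)}$.

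Finally I would assemble the two computations: both sides of the displayed equation equal $e^{2\tau(f)}$, and therefore coincide. The main obstacle is the middle step, namely pinning down that the metric functional $h$ produced abstractly by Theorem \ref{thm:metricspectral-1} is genuinely represented by a \emph{single} simple closed curve $\beta$ with the clean extremal-length form, rather than by a general measured foliation giving only a ``sup over curves'' expression. I expect to handle this by invoking the density of (weighted) simple closed curves in the space of measured foliations together with the continuity of extremal length, approximating the foliation underlying $h$ by simple closed curves and passing to the limit; one must check this approximation is compatible with the one-sided inequality $h(f^n x_0) \leq -\tau(f)n$ so that no growth is lost. The remaining steps — Royden's identification, Kerckhoff's formula, and the exponentiation — are routine once the representation of $h$ is secured.
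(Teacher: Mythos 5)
Your overall skeleton matches the paper's: Royden's identification, Kerckhoff's formula to compute the right-hand side as $e^{2\tau(f)}$, Theorem \ref{thm:metricspectral-1} to produce a metric functional $h$ with $-\frac{1}{n}h(f^{n}(x))\rightarrow\tau(f)$, and the Liu--Su/Gardiner--Masur identification of the metric compactification. But the step you yourself flag as the main obstacle is where the argument breaks. Your claim that every metric functional is (up to normalization) of the form $h(y)=\tfrac{1}{2}\log\frac{\mathrm{Ext}_{y}(\beta)}{\mathrm{Ext}_{x}(\beta)}$ for a \emph{single} simple closed curve $\beta$ is false: Miyachi's description gives functionals of the form $h(y)=\log\sup_{\alpha}\frac{E_{P}(\alpha)}{\mathrm{Ext}_{y}(\alpha)^{1/2}}-C$ for a general functional $E_{P}$ on curves (coming from a point of the Gardiner--Masur boundary, typically a genuine measured foliation), and these are not carried by individual curves. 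Your proposed repair --- approximating the underlying foliation by weighted simple closed curves and passing to the limit --- does not obviously close the gap either: you would need the approximation error to be uniform in $n$ to transfer the growth rate $e^{2\tau n}$ to a fixed curve, and nothing in the limit statement $-\frac{1}{n}h(f^{n}x)\rightarrow\tau$ gives you that.

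The fix is much simpler than a representation theorem and is exactly what the paper does: keep the supremum form of $h$ and bound the supremum \emph{from below} by a single term. Choose any simple closed curve $\beta$ with $D:=E_{P}(\beta)>0$ (such a curve exists since $E_{P}$ is not identically zero). Then the inequality $h(f^{n}(x))\leq-\tau n$ from Theorem \ref{thm:metricspectral-1} gives
\[
-\tau n\geq\log\sup_{\alpha}\frac{E_{P}(\alpha)}{\mathrm{Ext}_{f^{n}(x)}(\alpha)^{1/2}}-C\geq\log\frac{D}{\mathrm{Ext}_{f^{n}(x)}(\beta)^{1/2}}-C,
\]
hence $\mathrm{Ext}_{f^{n}(x)}(\beta)\geq D^{2}e^{-2C}e^{2\tau n}$. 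The matching upper bound $\mathrm{Ext}_{f^{n}(x)}(\beta)\leq C_{1}e^{2(\tau+\epsilon)n}$ comes from Kerckhoff's formula and $d(x,f^{n}(x))\leq(\tau+\epsilon)n$ for large $n$, since $\beta$ is one of the curves in the supremum. Taking $n$-th roots identifies both sides of the corollary with $e^{2\tau(f)}$. No claim about which functionals are extreme, and no density-of-curves argument, is needed.
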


\begin{proof}
For more information and bibliographic details, see \cite{K14}. Holomorphic
maps do not expand Teichmüller distances since it coincides with the
Kobayashi metric $d$. Kerckhoff showed to following formula (in particular
implying that this expression is symmetric)
\[
d(x,y)=\frac{1}{2}\log\sup_{\alpha}\frac{\mathrm{Ext}_{x}(\alpha)}{\mathrm{Ext}_{y}(\alpha)},
\]
where the supremum is taken of isotopy classes of simple closed curves
on $M$. Denote by $\tau$ the translation length $\tau(f)$ defined
above. Liu and Su showed that the metric compactification coincides
with the Gardiner-Masur compactification, and thanks also to Miyachi
there is a description of the metric functionals. From Theorem \ref{thm:metricspectral-1}
we then have
\[
-\tau n\geq h(f^{n}(x))=\log\sup_{\alpha}\frac{E_{P}(\alpha)}{\mathrm{Ext}_{f^{n}(x)}(\alpha)^{1/2}}-C\geq\log\frac{D}{\mathrm{Ext}_{f^{n}(x)}(\beta)^{1/2}}-C,
\]
for some curve $\beta$ (with $D=E_{P}(\beta)>0$ which must exist)
for all $n>1$. This gives
\[
\mathrm{Ext}_{f^{n}(x)}(\beta)\geq D^{2}e^{2C}e^{2\tau n}.
\]
The other inequality, for any $\epsilon>0$ and all sufficiently large
$n$ follows from the supremum in Kerckhoff's formula: 
\[
\mathrm{Ext}_{f^{n}(x)}(\beta)\leq C_{1}e^{2(\tau+\epsilon)n}.
\]
The result now follows.
\end{proof}
Here is a result that applies to any compactification, in particular
to holomorphic self-maps of bounded domains with the standard boundary
as a subset of $\mathbb{C}^{N}$. 
\begin{thm}
\label{thm:WDstar}(\cite[Theorem 11]{K05}) Let $f:X\rightarrow X$
be a nonexpansive map of a proper metric space. Assume that $\overline{X}$
is a sequentially compact compactification of $X$. Then either the
orbit is bounded or there is a boundary point $\xi\in\partial X$
such that for any $x\in X$, every limit point of $f^{n}(x)$ as $n\rightarrow\infty$
in $\partial X$ is contained in $S^{\vee}(\xi).$ 
\end{thm}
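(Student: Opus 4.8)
The plan is to treat the two alternatives of the statement separately. If the orbit $\{f^{n}x_{0}\}$ is bounded we are in the first case and there is nothing to prove, so I would assume it is unbounded. The object to exploit is $a_{n}:=d(x_{0},f^{n}x_{0})$, which is subadditive because $f$ is nonexpansive (indeed $a_{n+k}\le a_{k}+a_{n}$). Rather than pass through the metric functional of Theorem \ref{thm:metricspectral-1}, I would work with the \emph{record times} of this sequence: since the orbit is unbounded, $\sup_{n}a_{n}=\infty$, so there are infinitely many indices $n_{1}<n_{2}<\cdots$ with $a_{n_{i}}>a_{m}$ for every $m<n_{i}$, and $a_{n_{i}}\to\infty$. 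Using sequential compactness of $\overline{X}$ I would pass to a subsequence (still a sequence of records) with $f^{n_{i}}x_{0}\to\xi$ in $\overline{X}$; because $a_{n_{i}}\to\infty$, the limit cannot be an interior point, so $\xi\in\partial X$. Note that $\xi$ is manufactured from the $x_{0}$-orbit alone, hence is independent of the point $x$ in the conclusion.

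Next I would fix $x\in X$, write $R:=d(x_{0},x)$, and let $\eta\in\partial X$ be any limit point of $(f^{n}x)$, say $f^{m_{j}}x\to\eta$. The goal is $\xi\in S(\eta)$, equivalently $\eta\in S^{\vee}(\xi)$. The engine is a one-line estimate valid for any $m\ge1$ and any record time $n_{i}>m$:
\[
d(f^{n_{i}}x_{0},f^{m}x)\le d(f^{n_{i}}x_{0},f^{m}x_{0})+R\le a_{n_{i}-m}+R\le a_{n_{i}}+R=d(f^{n_{i}}x_{0},x_{0})+R,
\]
where the first inequality is the triangle inequality together with nonexpansiveness of $f^{m}$, the second is again nonexpansiveness of $f^{m}$, and the third is exactly the record property $a_{n_{i}-m}\le a_{n_{i}}$.

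I would then assemble the star membership directly from the definition, avoiding any first-countability hypothesis. Fix a neighborhood $V\in\mathcal{V}_{\eta}$ and choose an index $j$ with $f^{m_{j}}x\in V$; applying the displayed estimate with $m=m_{j}$ shows that, for all large $i$, the point $z_{i}:=f^{n_{i}}x_{0}$ satisfies $d(z_{i},V)\le d(z_{i},f^{m_{j}}x)\le d(z_{i},x_{0})+R$, i.e. $z_{i}\in H(V,R)$. Since $z_{i}\to\xi$, this gives $\xi\in\overline{H(V,R)}$, and crucially the constant $R$ does not depend on $V$. Hence $\xi\in\bigcap_{V\in\mathcal{V}_{\eta}}\overline{H(V,R)}\subset S(\eta)$, which is the desired conclusion. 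The same $\xi$ works for every $x$ (only the constant $R$ changes), and the argument is insensitive to whether $\tau(f)$ is positive or zero.

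The step I expect to be the main obstacle — or at least the one needing the essential idea — is producing a boundary point $\xi$ that is both genuinely at infinity and compatible with the halfspace inequality, in a compactification with no geodesics and no assumed first countability. The temptation is to extract $\xi$ from the metric functional $h$ of Theorem \ref{thm:metricspectral-1}, but $h$ lives in the metric compactification and transporting it to an arbitrary $\overline{X}$ is awkward. The record-time device is what resolves this: it simultaneously (i) forces $a_{n_{i}}\to\infty$, so that $\xi\in\partial X$ even in the degenerate case $\tau(f)=0$ where the orbit drifts to infinity sublinearly, and (ii) supplies the inequality $a_{n_{i}-m}\le a_{n_{i}}$ that yields membership in $H(V,R)$ with a constant \emph{uniform} in $V$. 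This uniformity is precisely what lets me conclude $\xi\in\bigcap_{V}\overline{H(V,R)}$ for a general compactification, rather than obtaining it only through the Duchin--Fisher sequence criterion under a first-countability assumption.
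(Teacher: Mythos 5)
Your proof is correct and takes essentially the same route as the paper's (the paper defers to \cite[Theorem 11]{K05}, whose argument is exactly this): extract record times of the unbounded sequence $d(x_{0},f^{n}x_{0})$, use nonexpansiveness to get $d(f^{n_{i}}x_{0},f^{m}x)\leq d(x_{0},f^{n_{i}}x_{0})+d(x_{0},x)$ for $m<n_{i}$, and conclude membership in $H(V,R)$ with a constant uniform in $V$. The identical computation appears in the paper's proofs of the monotone-displacement proposition in section \ref{sec:Wolff-Denjoy-type-theorems} and of Theorem \ref{thm:randomvisible}, so there is nothing to flag.
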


In the usual settings where one assumes something that implies $S^{\vee}(\xi)=\left\{ \xi\right\} $,
we of course may conclude that the orbits converge to this boundary
point, as in the usual Denjoy-Wolff theorem. In view of \cite{DF21}
a corollary, that no doubt is only a partial result, can be formulated:
\begin{cor}
Let $f$ be a holomorphic self-map of the Teichmüller space of a closed
surface. If the orbit is unbounded and has an accumulation point $\xi$
that is a uniquely ergodic foliation in the Thurston boundary, then
$f^{n}(x)\rightarrow\xi$ as $n\rightarrow\infty$ and any $x$.
\end{cor}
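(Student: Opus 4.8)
The plan is to read the corollary as a direct application of Theorem \ref{thm:WDstar} to Teichm\"uller space with the Teichm\"uller metric, compactified by the Thurston compactification, and then to use the Duchin--Fisher analysis of stars in that compactification to collapse the limit set to the single point $\xi$. First I would record the standing structural facts. The Teichm\"uller space $\mathcal{T}$ of a closed surface is a proper geodesic metric space for the Teichm\"uller metric, the Thurston compactification $\overline{\mathcal{T}}$ is a closed ball and hence a metrizable, therefore sequentially compact, compactification, and by Royden's theorem the Teichm\"uller metric equals the Kobayashi metric, so every holomorphic self-map $f$ is nonexpansive. Because $f$ is nonexpansive, $d(f^{n}x,f^{n}y)\leq d(x,y)$ for all $x,y$, so all orbits stay within bounded distance of one another; thus the hypothesis that one orbit is unbounded forces every orbit to be unbounded, and the boundary point supplied by the theorem is the same for every basepoint. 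This uniformity is precisely what will license the conclusion ``for any $x$''.

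With this in place I would apply Theorem \ref{thm:WDstar}: since the orbit is unbounded there is a boundary point $p\in\partial\mathcal{T}$, independent of $x$, such that every limit point in $\partial\mathcal{T}$ of the sequence $f^{n}(x)$ lies in $S^{\vee}(p)$. By hypothesis the uniquely ergodic foliation $\xi$ is such a limit point, so $\xi\in S^{\vee}(p)$, that is, $p\in S(\xi)$.

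The heart of the argument is to show that a uniquely ergodic foliation is a \emph{visibility point} of the Thurston compactification, meaning that for every boundary point $\eta\neq\xi$ there are disjoint neighborhoods of $\xi$ and $\eta$ such that every Teichm\"uller geodesic joining them meets a fixed compact set. This is exactly the kind of input extracted from \cite{DF21}, combined with the classical control of Teichm\"uller geodesics at uniquely ergodic endpoints: geodesic rays converging to a uniquely ergodic foliation are well behaved, so two families issuing toward distinct boundary points cannot both track $\xi$. Granting this, Proposition \ref{prop:visib}, which applies since $\mathcal{T}$ is a proper geodesic space, yields $S(\xi)\cap S(\eta)=\emptyset$ for every $\eta\neq\xi$. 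As $\xi\in S(\xi)$ and $\eta\in S(\eta)$ always hold, this disjointness gives at once $S(\xi)=\{\xi\}$ and $S^{\vee}(\xi)=\{\xi\}$.

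Finally I would assemble the pieces. From $p\in S(\xi)=\{\xi\}$ we get $p=\xi$, so the theorem's conclusion becomes: every limit point $\eta$ of $f^{n}(x)$ lies in $S^{\vee}(\xi)=\{\xi\}$, forcing $\eta=\xi$. Since $\overline{\mathcal{T}}$ is compact and metrizable, a sequence all of whose accumulation points equal $\xi$ converges to $\xi$; hence $f^{n}(x)\to\xi$ for every $x$. The main obstacle is the middle step: pinning down precisely which statement about stars of uniquely ergodic foliations one may legitimately invoke from \cite{DF21}, since the Thurston compactification genuinely differs from the metric (Gardiner--Masur) compactification in which the metric machinery is most transparent, and one must verify that the visibility estimate, and hence the triviality of $S(\xi)$ and $S^{\vee}(\xi)$, holds in the Thurston topology rather than only in the metric one. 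This is also why, as noted in the text, the result is expected to be only partial.
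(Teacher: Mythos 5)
Your proposal is correct in outline and is essentially the argument the paper intends: the text offers no written proof beyond ``in view of \cite{DF21}'', and the intended deduction is exactly your first and last steps. You handle the bookkeeping with Theorem \ref{thm:WDstar} properly --- the point $p$ supplied by the theorem is already uniform in $x$, $\xi\in S^{\vee}(p)$ unwinds to $p\in S(\xi)$, and triviality of both $S(\xi)$ and $S^{\vee}(\xi)$ plus sequential compactness of the Thurston ball gives convergence for every basepoint. The one place where you diverge from the intended route is the middle step: you propose to obtain $S(\xi)=S^{\vee}(\xi)=\{\xi\}$ by first showing that a uniquely ergodic point is a \emph{visibility} point of the Thurston compactification and then invoking Proposition \ref{prop:visib}. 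That is not what \cite{DF21} provide, and it is a strictly stronger (and, for the Thurston topology, not obviously available) statement: Teichm\"uller geodesics are in general poorly behaved relative to the Thurston boundary, and visibility at uniquely ergodic endpoints is not in the literature you would be citing. What \cite{DF21} actually give is a direct computation of stars via intersection numbers and Masur's unique-ergodicity results: the star of a uniquely ergodic $[\mu]$ is $\{[\mu]\}$, and $[\mu]$ lies in the star of $[\nu]$ only when $i(\mu,\nu)=0$, which for uniquely ergodic (minimal, filling) $\mu$ forces $[\nu]=[\mu]$; this yields the triviality of the dual star without any visibility argument. So the safe version of your proof replaces the visibility detour by a citation of that computation; with that substitution the argument is complete and matches the paper's intent.
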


A conceivable strengthening of the theorem could be that the limit
set in the unbounded case has to be contained in a single face, compare
with Proposition \ref{prop:geodesic} for an analogy. In order to
be less vague, while more risky, let us formulate the following conjecture:
\begin{conjecture}
Let $X$ be a bounded domain in $\mathbb{C}^{N}$ equipped with Kobayashi
metric and assume it is a proper metric space. Let $\overline{X}\subset\mathbb{C}^{N}$
be the standard closure. Let $f:X\rightarrow X$ be holomorphic. Then
either the orbits stay away from the boundary or there is a closed
face $F\subset\partial X$ in the above metric sense such that for
any $x\in X$ every accumulation point of $f^{n}(x)$ as $n\rightarrow\infty$
belongs to $F$.
\end{conjecture}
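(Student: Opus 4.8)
The plan is to treat the orbit $(f^{n}(x))_{n\ge 1}$ as a discrete analogue of a geodesic ray and to imitate the proof of Proposition \ref{prop:geodesic}, replacing the exact additivity enjoyed by a geodesic with the subadditivity of $n\mapsto d(x,f^{n}(x))$ together with the metric spectral theorem. First I would dispose of the trivial alternative: if the forward orbit stays away from $\partial X$, then, $X$ being proper, the orbit is relatively compact in $X$ and there is nothing to prove. So assume the orbit accumulates on $\partial X$, and let $L\subset\partial X$ denote its (nonempty, closed) limit set; here $\tau(f)\ge 0$ records the asymptotic escape rate.

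Next I would collect what is already available. Applying Theorem \ref{thm:WDstar} to the standard closure $\overline{X}\subset\mathbb{C}^{N}$ produces a boundary point $\xi$ with $L\subset S^{\vee}(\xi)$, that is, $\xi\in S(\eta)$ for every $\eta\in L$. Consequently the intersection $F:=\bigcap_{\eta\in L}S(\eta)$ contains $\xi$, and is therefore a genuine (nonempty, closed) face. The whole conjecture thus reduces to the single assertion that $L\subset F$, i.e. that any two accumulation points $\eta,\eta'\in L$ satisfy $\eta'\in S(\eta)$. This mutual star-membership is exactly what Proposition \ref{prop:geodesic} extracts from a geodesic ray, so the task is to reproduce that step in the orbit setting.

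To obtain $\eta'\in S(\eta)$ I would invoke the sequence criterion of Duchin--Fisher. Choosing orbit times $m_{j}\to\infty$ with $f^{m_{j}}(x)\to\eta'$ and $n_{i}\to\infty$ with $f^{n_{i}}(x)\to\eta$, and passing to a subsequence with $n_{i}>m_{j}$, nonexpansiveness gives $d(f^{n_{i}}(x),f^{m_{j}}(x))\le d(x,f^{\,n_{i}-m_{j}}(x))$, while $d(f^{n_{i}}(x),x_{0})\ge d(x,f^{n_{i}}(x))-d(x,x_{0})$. The criterion is then verified once one knows the geodesic-type estimate $d(x,f^{\,n-m}(x))\le d(x,f^{n}(x))+C$ for $n>m$ along the chosen times, with $C$ independent of the indices. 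This is the precise analogue of the inequality $n_{i}-k_{j}\le d(\gamma(n_{i}),\gamma(0))$ used in Proposition \ref{prop:geodesic}, and everything downstream is routine once it is in hand.

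The hard part is exactly this last estimate. For a geodesic ray it is automatic from $d(\gamma(s),\gamma(t))=|s-t|$, but for an orbit the function $a(n)=d(x,f^{n}(x))$ is only subadditive, so that $a(n)\ge\tau(f)\,n$ and $a(n)/n\to\tau(f)$ hold while $a(n-m)\le a(n)+C$ can fail: the sublinear error in $a(n)=\tau(f)\,n+o(n)$ and the unboundedness of the index gaps $n-m$ are simply not controlled by the purely metric input, which is why Theorem \ref{thm:WDstar} delivers only the dual star $S^{\vee}(\xi)$ and not a single face. Overcoming this is where holomorphy, rather than mere nonexpansiveness, must enter, and where I expect the real work to lie; indeed a general nonexpansive map need not confine its limit set to a face, so any successful argument has to use the complex structure, consistent with the statement being phrased as a conjecture. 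The route I would pursue is to show that, under the standing hypotheses, the orbit is along the relevant times a $(1,\kappa)$-almost-geodesic in the sense of \cite{BZ17}, or at least satisfies the weaker monotonicity $a(n-m)\le a(n)+C$; the Kobayashi boundary estimates underlying Theorem \ref{thm:visible}, the horoball inclusion of Proposition \ref{prop:horoball}, and the decreasing metric functional furnished by Theorem \ref{thm:metricspectral-1} are the tools I would try to combine to force such almost-geodesic behaviour.
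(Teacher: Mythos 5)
You have correctly diagnosed the situation, but be aware that the statement you were asked to prove is stated in the paper as a \emph{conjecture}: the paper offers no proof of it, only partial results. Your proposal does not close the gap either, and you say so yourself. The single missing step you isolate --- the estimate $d(x,f^{\,n-m}(x))\le d(x,f^{n}(x))+C$ for $n>m$, which for a genuine geodesic ray is automatic but for an orbit of a nonexpansive map is exactly what subadditivity of $a(n)=d(x,f^{n}(x))$ fails to deliver --- is precisely the hypothesis the paper adds in order to obtain its partial result: the Proposition immediately following the conjecture assumes $d(z,f^{n}(z))\nearrow\infty$ monotonically and then runs exactly the chain of inequalities you write down, namely $d(f^{n_{i}}(z),f^{k_{j}}(z))\le d(z,f^{\,n_{i}-k_{j}}(z))<d(z,f^{n_{i}}(z))\le d(f^{n_{i}}(z),x_{0})+d(x_{0},z)$, concluding mutual star-membership of all accumulation points and hence containment in a single face. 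The paper even records your weaker variant, noting the same argument works if $d(z,f^{an}(z))\nearrow\infty$ for some $a>0$. Your reduction via Theorem \ref{thm:WDstar} to the nonemptiness of $F=\bigcap_{\eta\in L}S(\eta)$ and the identification of the remaining task as $L\subset F$ are sound and consistent with the paper's remark that Theorem \ref{thm:WDstar} is only a partial result.

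So there is no error of reasoning in what you wrote, but there is also no proof: the conjecture remains open, and your closing paragraph --- that holomorphy rather than mere nonexpansiveness must be exploited to force the almost-geodesic or monotonicity behaviour of the orbit --- is speculation about how one might eventually prove it, not an argument. If you want to turn your attempt into a citable statement, state and prove exactly the conditional version you have (assuming monotone, or eventually monotone along an arithmetic progression, divergence of $d(z,f^{n}(z))$); that is what the paper does.
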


Theorem \ref{thm:WDstar} provides a partial result. Note that one
could also ask the same instead using the notion of face\emph{ }as
the a priori non-metric sense of being an intersection of $\partial X$
with a hyperplane. This relates to \cite{A91,AR14} which imply partial
results on the conjecture in cases that the domain is convex or has
a simple boundary.

Here is another partial result:
\begin{prop}
Let $X$ be a bounded domain in $\mathbb{C}^{N}$ equipped with Kobayashi
metric. Let $\overline{X}\subset\mathbb{C}^{N}$ be the standard closure.
Let $f:X\rightarrow X$ be holomorphic. Assume that $d(z,f^{n}(z))\nearrow\infty$
monotonically for some $z\in X$. Then there is a closed face $F\subset\partial X$
in the above metric sense such that for any $x\in X$ every accumulation
point of $f^{n}(x)$ as $n\rightarrow\infty$ belongs to $F$.
\end{prop}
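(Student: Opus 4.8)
The plan is to adapt the argument of Proposition \ref{prop:geodesic}, replacing the geodesic ray by the orbit $x_n:=f^n(z)$ and using the monotonicity hypothesis to supply the estimate that a geodesic gets for free. Since $f$ is holomorphic it is nonexpansive for the Kobayashi distance $d$, and since $d(z,f^n(z))\nearrow\infty$ the orbit of $z$ leaves every Kobayashi-bounded, hence every Euclidean-compact, subset of $X$; thus every accumulation point of $\{f^n(z)\}$ in the compact metrizable space $\overline{X}$ lies in $\partial X$, and such points exist by sequential compactness. The same escape holds for an arbitrary orbit $\{f^n(x)\}$, because nonexpansiveness gives $d(f^n(z),f^n(x))\le d(z,x)$ and so $d(z,f^n(x))\to\infty$ as well.

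The heart of the matter is the following claim, which I would prove after fixing the base point $x_0=z$: if $\xi$ is an accumulation point of $\{f^n(z)\}$ and $\eta$ is an accumulation point of an arbitrary orbit $\{f^n(x)\}$, then $\eta\in S(\xi)$. To see this I would choose indices $p_n$ with $f^{p_n}(z)\to\xi$ and then, using that $\eta$ is attained along arbitrarily large indices, choose $q_n>p_n$ with $f^{q_n}(x)\to\eta$. Writing $f^{q_n}(x)=f^{p_n}(f^{q_n-p_n}(x))$ and applying nonexpansiveness of $f^{p_n}$ followed by the triangle inequality gives
\[
d(f^{q_n}(x),f^{p_n}(z))\le d(f^{q_n-p_n}(x),z)\le d(x,z)+d(z,f^{q_n-p_n}(z)).
\]
Here the \emph{monotonicity} enters decisively: since $q_n-p_n\le q_n$ we have $d(z,f^{q_n-p_n}(z))\le d(z,f^{q_n}(z))$, while a second use of nonexpansiveness bounds $d(z,f^{q_n}(z))\le d(f^{q_n}(x),z)+d(x,z)$. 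Combining these yields $d(f^{q_n}(x),f^{p_n}(z))\le d(f^{q_n}(x),z)+2d(x,z)$, which is exactly the halfspace estimate required, with the uniform constant $C=2d(x,z)$. The sequence criterion of Duchin--Fisher, applied with $y_n=f^{q_n}(x)\to\eta$ and $x_n=f^{p_n}(z)\to\xi$, then delivers $\eta\in S(\xi)$.

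With the claim in hand the proposition follows by setting $F:=\bigcap_{\xi}S(\xi)$, the intersection taken over all accumulation points $\xi$ of the single orbit $\{f^n(z)\}$. Applying the claim with $x=z$ shows that every accumulation point of $\{f^n(z)\}$ already lies in $F$, so $F$ is a non-empty intersection of stars, that is, a closed face; applying it for general $x$ shows that every accumulation point of every orbit lies in $F$. That $F\subset\partial X$, rather than meeting $X$, follows because for $\xi\in\partial X$ interior points sit at unbounded Kobayashi distance from points tending to $\xi$, hence eventually escape each halfspace $H(V,C)$ as the neighborhood $V$ of $\xi$ shrinks; if one wishes to avoid any completeness assumption one simply passes to $F\cap\partial X$.

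The step I expect to be the main obstacle is precisely the one exploiting monotonicity: one must arrange the interleaving $q_n>p_n$ and produce a constant $C$ independent of $n$, and it is exactly the monotone growth of $d(z,f^n(z))$ that forbids the orbit from returning closer to $z$ and thereby ruining the bound $d(z,f^{q_n-p_n}(z))\le d(z,f^{q_n}(z))$. This is the feature that upgrades the conclusion of Theorem \ref{thm:WDstar}, which only confines the limit set to a single dual star $S^{\vee}(\xi)$, to confinement within an honest face.
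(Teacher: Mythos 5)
Your proof is correct and follows essentially the same route as the paper: nonexpansiveness reduces $d(f^{a}(\cdot),f^{b}(\cdot))$ to $d(z,f^{a-b}(z))$, monotonicity bounds this by $d(z,f^{a}(z))$, and the triangle inequality yields the uniform halfspace estimate giving mutual star membership of accumulation points, with $F$ the intersection of the resulting stars. The only (welcome) difference is that you build the arbitrary orbit $\{f^{n}(x)\}$ directly into the key estimate with the constant $2d(x,z)$, whereas the paper proves the claim for the $z$-orbit alone and disposes of general $x$ with a one-line bounded-distance remark.
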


\begin{proof}
Given two subsequences $f^{n_{i}}(z)\rightarrow\xi\in\partial X$
and $f^{k_{j}}(z)\rightarrow\eta$ as $i,j\rightarrow\infty.$ For
any $n_{i}>k_{j}$ we have 
\[
d(f^{n_{i}}(z),f^{k_{j}}(z))\leq d(z,f^{n_{i}-k_{j}}(z))<d(z,f^{n_{i}}(z))\leq d(f^{n_{i}}(z),x_{0})+d(x_{0},z).
\]
For any neighborhood $V$ of $\eta$ we can find a large enough $j$
so that $f^{k_{j}}(z)\in V$, and the above inequality means that
\[
f^{n_{i}}(z)\in H(V,C)
\]
for all $i$ large enough and where $C=d(x_{0},z)$. Hence $\xi\in S(\eta)$.
Since this was for two arbitrary sequence we must also have $\eta\in S(\xi)$
and can conclude that $F$ being the intersection of all the stars
of all accumulation points contains all accumulations points (even
when changing $z$to $x$ since the respective orbits stay on bounded
distance and this does not influence the stars). 
\end{proof}
The same argument would work if one merely knew that for some $a>0$,
$d(z,f^{an}(z))\nearrow\infty$. This is presumably most often the
case, but it may not be so easy to guarantee.

\section{The Calabi flow\label{sec:The-Calabi-flow}}

Given a Kähler manifold with a fixed Kähler class, a natural question
is to determine whether there exists a canonical choice of Kähler
metric in this class. One potential such choice, generalizing Riemann
surface theory and Kähler-Einstein metrics, is to look for metrics
of constant scalar curvature. 

Let $(Y,J,\omega)$ be a compact connected Kähler manifold and consider
the space $\mathcal{H}$ of smooth Kähler metrics in the cohomology
class $[\omega]$, introduced by Calabi in the 1950s. This space can
be equipped with a Riemannian metric (Mabuchi-Semmes-Donaldson) of
Weil-Petersson or Ebin type with nonpositive sectional curvatures
and such that the metric completion $\mathcal{E}^{2}$ is a CAT(0)-space
admitting a concrete description in terms of plurisubharmonic functions
(due to Darvas). The Calabi flow on $\mathcal{H}$ in the space of
metrics does not expand distances as long as it exists. It is believed
to exists for all times. Moreover it is expected that either the flow
converges to a constant scalar curvature metric or it diverges and
should asymptotically contain some information about the Kähler structure
(made precise in a conjecture of Donaldson in terms of geodesic rays).
Streets suggested to study a weak Calabi flow $\Phi_{t}$, which is
nonexpansive being the gradient flow of a convex function $M$, the
$K$-energy of Mabuchi, and exists for all time and coincides with
the Calabi flow when it exists. I refer to \cite{CC02,BDL17,X21}
for more information and appropriate references. The works of Mayer
and Bacak in pure metric geometry play an important role here, let
me add another good reference \cite{CaL10} on gradient flows of convex
functions on CAT(0)-spaces generally. 

From a flow $\Phi_{t}$ we can define the time-one map $f(x)=\Phi_{1}(x)$,
and we have that $f^{n}(x)=\Phi_{n}(x).$ From what has been said
we have that $f$ is a nonexpansive self-map of a complete CAT(0)-space.

Motivated by this it should be useful to see what can be said about
the iteration of nonexpansive maps in the setting of CAT(0)-spaces
(note by the way that since the Kobayashi metric is a supremum-type
metric it is almost never CAT(0) apart from the well-known exceptional
cases). Some results and arguments for locally compact spaces are
contained in Beardon \cite{Be90}.

For general complete CAT(0)-spaces, in the case that the orbits of
$f$ are bounded, several authors have observed the existence of a
fixed point, notably Kirk in \cite{Ki03}. I gave the following argument
in my doctoral thesis:
\begin{prop}
\label{thm:fixed point}Let $f$ be a nonexpansive map of a complete
CAT(0)-space. If the orbits are bounded, then $f$ has a fixed point.
\end{prop}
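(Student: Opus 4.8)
The plan is to obtain the fixed point as the unique minimizer of an asymptotic radius functional attached to a single orbit, the decisive ingredient being the strong convexity of the squared distance in a CAT(0)-space. Fix any $x_0$ and, using that the orbit $\{f^n(x_0)\}$ is bounded, define
\[
\phi(x) = \limsup_{n\to\infty} d(x, f^n(x_0))^2 .
\]
Boundedness of the orbit makes $\phi$ finite everywhere and coercive (it tends to $\infty$ as $d(x,x_0)\to\infty$). The point $z$ we seek will be the \emph{asymptotic center} of the orbit, namely the minimizer of $\phi$.

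The first step is to record the Bruhat--Tits (CN) inequality, which holds in any CAT(0)-space: for the midpoint $m$ of $x$ and $y$ and any $z$,
\[
d(m,z)^2 \le \tfrac12 d(x,z)^2 + \tfrac12 d(y,z)^2 - \tfrac14 d(x,y)^2 .
\]
Applying this with $z=f^n(x_0)$ and passing to the $\limsup$, using the subadditivity $\limsup(a_n+b_n)\le\limsup a_n+\limsup b_n$, yields the key strong convexity estimate
\[
\phi(m) \le \tfrac12 \phi(x) + \tfrac12 \phi(y) - \tfrac14 d(x,y)^2 .
\]
Next I would set $r=\inf_x \phi(x)\ge 0$ and take a minimizing sequence $z_k$. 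Applying the last inequality to the midpoint of $z_j$ and $z_k$ gives $\tfrac14 d(z_j,z_k)^2 \le \tfrac12\phi(z_j)+\tfrac12\phi(z_k)-r$, whose right-hand side tends to $0$; hence $(z_k)$ is Cauchy and, by completeness, converges to a point $z$ with $\phi(z)=r$. The same estimate applied to two minimizers shows that $z$ is unique.

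It remains to see that $z$ is fixed, which is where nonexpansiveness enters. Since $d(f(z),f^n(x_0)) = d(f(z),f(f^{n-1}(x_0))) \le d(z,f^{n-1}(x_0))$ and shifting the index leaves the $\limsup$ unchanged, we obtain
\[
\phi(f(z)) = \limsup_{n\to\infty} d(f(z), f^n(x_0))^2 \le \limsup_{n\to\infty} d(z, f^{n-1}(x_0))^2 = \phi(z) = r .
\]
Because $r$ is the minimum of $\phi$, this forces $\phi(f(z))=r$, and uniqueness of the minimizer gives $f(z)=z$.

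I do not expect a serious obstacle: the argument is essentially formal once the right functional is chosen. The one genuinely load-bearing point is the strong convexity inequality for $\phi$, where CAT(0) geometry (and not merely the convexity of the metric along geodesics) is essential, together with the completeness-driven Cauchy argument that upgrades an infimum into an attained, unique minimizer. The final fixed-point step then uses only nonexpansiveness and the shift-invariance of the $\limsup$. An alternative route replaces the asymptotic center of a single orbit by the circumcenter of the closure of the orbit, but the squared-distance functional above keeps the convexity bookkeeping cleanest.
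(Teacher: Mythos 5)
Your proof is correct, and it takes a genuinely different route from the paper. You build the fixed point as the \emph{asymptotic center} of a single orbit: you minimize $\phi(x)=\limsup_n d(x,f^n(x_0))^2$, use the Bruhat--Tits (CN) inequality to get strong convexity of $\phi$, extract a Cauchy minimizing sequence via completeness, and then conclude from nonexpansiveness plus shift-invariance of the $\limsup$ together with uniqueness of the minimizer. The paper instead runs a Kirk-style argument: it forms a bounded closed convex $f$-invariant set from the orbit (an intersection of balls of radius the orbit's diameter), invokes Zorn's lemma to produce a \emph{minimal} such invariant set, and takes the circumcenter of that minimal set. Both arguments ultimately rest on the uniform convexity of CAT(0) (unique circumcenters in one case, the CN inequality in the other), but yours is more direct and constructive: it avoids Zorn's lemma entirely, needs only one bounded orbit, and produces a canonical fixed point with a one-line verification that it is fixed. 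What the minimal-invariant-set approach buys is generality of a different kind --- it is the template that extends to commuting families of nonexpansive maps and to the broader metric fixed point theory the paper cites (Kirk), where an asymptotic center of a single orbit may not suffice. The only cosmetic remark is that the coercivity of $\phi$ you note is not actually needed: the strong convexity inequality already forces any minimizing sequence to be Cauchy.
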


\begin{proof}
It follows from CAT(0), in fact the uniform convexity, that bounded
subset possesses a unique circumcenter, which is a point that is the
center of a closed ball of minimal radius containing the set. The
uniform convexity also leads to that the intersection of any nested
sequence of decreasing closed convex sets is nonempty, because the
circumcenters form a Cauchy sequence and the limit is a point in the
intersection.

Given an orbit $x_{n}:=f_{n}^{n}(x)$ we can construct an invariant,
bounded, closed convex set in the following way. For each $k>1$ we
let $C_{k}$ be the intersection of the closed balls with centers
at $x_{n}$ and radius the diameter of the orbit. These are non-empty,
bounded and convex. Moreover $f:C_{k}\rightarrow C_{k+1}$ , so the
the closure of the union of all $C_{k}$ is the desired invariant
set. Such sets are ordered by inclusion and every linearly ordered
chain has a lower bound via the intersection, thus Zorn's lemma provides
a minimal element. The circumcenter of this minimal element must be
a fixed point of $f$.
\end{proof}
CAT(0)-spaces have the so-called \emph{visual bordification }consisting
of equivalence classes of geodesic rays (this is is not related to
the visibility property above). For proper spaces it coincides with
the metric compactification. 
\begin{thm}
\label{thm:cat0}Let $f$ be a nonexpansive map of a complete CAT(0)-space
$X$. If the orbits are bounded, then $f$ has a fixed point. If $d_{f}:=\inf_{x}d(x,f(x))>0$,
then there exists a unique geodesic ray $\gamma$ from $x$ such that
\[
\frac{1}{n}d(f^{n}(x),\gamma(d_{f}n))\rightarrow0,
\]
as $n\rightarrow\infty$, and $b(f(x))\leq b(x)-d_{f}$ for all $x$
where $b$ denotes the Busemann function associated to $\gamma$.
In particular, $f(x)$ converges to the class of $\gamma$ in the
visual bordifcation. If $d_{f}=0$, then there is a metric functional
$h$ such that $h(f(x))\leq h(x)$ for all $x$. 
\end{thm}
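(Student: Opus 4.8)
I would organize the proof around the translation number $\tau=\tau(f)$ and, crucially, its coincidence $\tau=d_f$ in nonpositive curvature. The first assertion is exactly Proposition \ref{thm:fixed point}, so only the two fixed-point-free regimes, distinguished by whether $d_f=0$ or $d_f>0$, remain. For the case $d_f=0$ the plan is to produce the metric functional directly, using no curvature at all. Choose $x_k\in X$ with $d(x_k,f(x_k))\to 0$ and form the functionals $h_{x_k}(y)=d(y,x_k)-d(x_0,x_k)$, that is, the images $\Phi(x_k)$ in the metric compactification $\overline{X}^{h}$; by compactness a subsequence converges pointwise to a metric functional $h$. For each $y$, nonexpansiveness gives $d(f(y),x_k)\le d(y,x_k)+d(x_k,f(x_k))$, hence $h_{x_k}(f(y))-h_{x_k}(y)\le d(x_k,f(x_k))\to 0$, and passing to the limit yields $h(f(y))\le h(y)$ for all $y$.

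The case $d_f>0$ is the heart of the matter and is the CAT(0) instance of the noncommutative ergodic theorem of Karlsson--Margulis. Here the orbit leaves every bounded set, so the metric functional $h$ supplied by Theorem \ref{thm:metricspectral-1} is a boundary point of $\overline{X}^{h}$, and in a complete CAT(0) space such a functional is the Busemann function $b$ of a geodesic ray $\gamma$ from $x$; Theorem \ref{thm:metricspectral-1} then already gives $b(f^n x)=-\tau n+o(n)$. Granting the identity $\tau=d_f$, the tracking statement follows from the quantitative CAT(0) comparison estimate coming from the Euclidean comparison triangle: writing $r=d(x,y)$ and $\delta=b(y)+r\ge 0$, one has $d(y,\gamma(r))\le\sqrt{2r\delta}$. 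Applied to $y=f^n x$, the definition of $\tau$ gives $r_n:=d(x,f^n x)=\tau n+o(n)$ and $\delta_n:=b(f^n x)+r_n=o(n)$, so $d(f^n x,\gamma(\tau n))\le\sqrt{2r_n\delta_n}+|r_n-\tau n|=o(n)$, which is the assertion with $\tau=d_f$. Since $r_n\to\infty$, this sublinear tracking forces $f^n x\to\gamma(\infty)$ in the visual bordification, and uniqueness of $\gamma$ is immediate: two rays from $x$ that track the orbit sublinearly are asymptotic, hence equal in a CAT(0) space.

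The main obstacle is to establish the pointwise Busemann inequality $b(f(y))\le b(y)-d_f$ for all $y$, from which $\tau=d_f$ follows at once: since $b$ is $1$-Lipschitz with $b(x)=0$, iterating gives $d_f\,n\le -b(f^n x)\le d(x,f^n x)$, so $d_f\le\tau$, while $\tau\le d_f$ is the telescoping bound $d(x_0,f^n x_0)\le n\,d(x_0,f(x_0))$. The subtlety is that Theorem \ref{thm:metricspectral-1} controls $b$ only along a single orbit, and only up to the sublinear error inherent in tracking, so the naive limit of the functionals $y\mapsto d(y,f^n x_0)-d(x_0,f^n x_0)$ does not by itself yield a per-step inequality valid for every $y$. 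I expect the resolution to combine Karlsson's selection of regular times $n_k$, at which the subadditive sequence $a_n=d(x_0,f^n x_0)$ satisfies $a_{n_k}-a_{n_k-m}\ge(\tau-\epsilon_k)m$ for all $1\le m\le n_k$, with the convexity of the CAT(0) metric: convexity is what should upgrade the one-basepoint estimate to the uniform inequality $b\circ f\le b-d_f$ and simultaneously force the segments $[x_0,f^{n_k}x_0]$ to converge to $\gamma$. Checking that this convexity-upgrade runs cleanly in an arbitrary complete CAT(0) space, with neither properness nor local compactness assumed, is the step I expect to demand the most care.
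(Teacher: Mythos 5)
Your architecture matches the paper's: bounded orbits via Proposition \ref{thm:fixed point}; the case $d_{f}=0$ by an explicit limit of the functionals $h_{x_{k}}$ with $d(x_{k},f(x_{k}))\rightarrow0$ (this is correct, modulo using a subnet in a non-separable space, and is essentially the argument the paper delegates to \cite{K01}); and the case $d_{f}>0$ via the Karlsson--Margulis theorem together with the comparison estimate $d(y,\gamma(r))^{2}\leq2r\delta$, which you state correctly. The problem is that the step you yourself flag as ``the main obstacle'' is the real content of the theorem, and your plan for it is circular. What the regular-times machinery (Theorem \ref{thm:metricspectral-1}, \cite{KM99,KL06}) naturally produces is a functional $h$ with $h(f(y))\leq h(y)-\tau$, i.e.\ per-step decay with constant $\tau$, the translation number. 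To restate everything with the constant $d_{f}$ --- which is how the theorem is phrased and what you need for $\gamma(d_{f}n)$ --- you must already know $d_{f}\leq\tau$: the inequality $b\circ f\leq b-d_{f}$ from which you propose to \emph{deduce} $d_{f}\leq\tau$ encodes it, since $b$ is $1$-Lipschitz. (The reverse inequality $\tau\leq d_{f}$ is the trivial telescoping one, as you note.) The paper closes exactly this gap by quoting Gaubert--Vigeral \cite{GV12}: for nonexpansive self-maps of star-shaped hemi-metric spaces, hence of CAT(0) spaces, $d_{f}=\tau_{f}$. Without that input --- a genuine theorem in the Kohlberg--Neyman/Plant--Reich line, not a routine convexity upgrade --- your argument does not start in the $d_{f}>0$ case: you cannot even conclude $\tau>0$ from $d_{f}>0$.

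A second, lesser issue: you assert that the metric functional supplied by Theorem \ref{thm:metricspectral-1} is automatically the Busemann function of a geodesic ray from $x$ because the space is complete CAT(0). For non-proper CAT(0) spaces the horofunction boundary is strictly larger than the set of Busemann functions (in an infinite-dimensional Hilbert space the functionals $y\mapsto-\langle y,v\rangle$ with $\Vert v\Vert<1$ are metric functionals but not Busemann functions), so this requires an argument. It can be supplied --- a functional decaying at the maximal rate $\tau$ along an orbit escaping at rate $\tau$ is forced to be a Busemann function, which is essentially \cite{KL06} --- but the paper sidesteps the point by taking the ray directly from \cite{KM99}, whose proof constructs $\gamma$ as a limit of the segments $[x,f^{n_{k}}x]$ using only convexity and completeness, with no compactness. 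Your derivation of the sublinear tracking from the Busemann inequality via the comparison estimate is correct and is a nice self-contained substitute for that portion of \cite{KM99}, but it sits downstream of the two unproved inputs above.
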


\begin{proof}
The bounded orbit case is treated in Proposition \ref{thm:fixed point}.
It is known from \cite{GV12} that for any nonexpansive self-map $f$
of a CAT(0)-space, since it is a star-shaped hemi-metric space, the
minimal displacement $d_{f}$ equals the translation length $\tau_{f}$.
Therefore $\tau_{f}>0$, and a special case of the main result in
\cite{KM99} then shows that there is a unique unit-speed geodesic
ray $\gamma$ from $x$ such that
\[
\frac{1}{n}d(f^{n}(x),\gamma(d_{f}n))\rightarrow0.
\]
In particular, which is strictly weaker, $f^{n}(x)$ converges to
the visual boundary point $[\gamma]$ for any $x\in X$. Let $b$
be the Busemann function associated to the geodesic ray emanating
from $x_{0}$ representing the class of $\gamma$. This is the unique
Busemann function such that (\emph{cf. }\cite{KL06}) 
\[
-\frac{1}{n}b(f^{n}x)\rightarrow d_{f}.
\]
 Theorem \ref{thm:metricspectral-1} now implies moreover that $b(f(x))\leq b(x)-d_{f}$
which holds for all $x$ in view of \cite{GV12}. And in the remaining
case that $d_{f}=0$ and the infimum is not attained, Theorem \ref{thm:metricspectral-1}
gives a metric functional $h$ such that $h(f^{n}(x_{0}))\leq0$ for
all $n>0.$ When the space is CAT(0) this can be improved, see \cite{K01}
or \cite{GV12}, to give the remaining assertion for any $x$.
\end{proof}
Anyone from complex dynamics, or attentive reader of the previous
sections, will notice that this too is a Wolff-Denjoy type theorem
in a purely metric setting, but now with implications for complex
geometry. It is known, see \cite{BDL17}, that either the trajectories
of the weak Calabi flow converges to a constant scalar curvature metric,
or they diverge, $d(x,\Phi_{t}(x))\rightarrow\infty$ for any $x$.
In the latter case on would like to know some directional behavior,
for example the notion of weakly asymptotic geodesic ray introduced
by Darvas-He and studied in \cite{BDL17}, which is a notion much
weaker (for example not necessarily unique) than the one in Theorem
\ref{thm:cat0}. In a special case there is more precise asymptotic
convergence to a geodesic ray known \cite[Theorem 6.3]{CS14} used
to prove the main result concerning uniqueness properties of constant
scalar curvature metrics in that same paper. We will compare the above
with Xia's paper \cite{X21}. Having as a starting point an inequality
and conjecture of Donaldson in \cite{Do05}, Xia proved an analog
of the conjecture when enlarging the space from $\mathcal{H}$ to
$\mathcal{E}^{2}$, namely:
\[
C:=\inf_{x\in\mathcal{E}^{2}}\left|(\partial M)(x)\right|=\max\frac{-\mathbf{M\mathrm{(\ell)}}}{\left\Vert \ell\right\Vert }
\]
where the maximum on the right is taken over boundary points / geodesic
rays in $\mathcal{E}^{2}$. The expression $\left|(\partial M)(x)\right|$
is the local upper gradient of the Mabuchi energy. If its infimum
is strictly positive $(Y,\omega)$ is called \emph{geodesically unstable
}(and in particular admitting no constant scalar curvature metric
in its class). Recent results on geodesic stability and the existence
of constant scalar curvature Kähler metrics can be found in \cite{CCh21}.

Let me formulate the following that improves and reproves parts of
\cite[Corollary 1.2, Corollary 4.2]{X21}:
\begin{cor}
\label{cor:Xia}Let $(Y,\omega)$ be a compact connected Kähler manifold
that is geodesically unstable. Let $\Phi_{t}(x)$ be the weak Calabi
flow on the associated space $\mathcal{E}^{2}$ starting from $x$.
Then there exists a unique geodesic ray $\gamma$ from $x$ on sublinear
distance to $\Phi_{t}(x_{0})$, that is 
\[
\frac{1}{t}d(\Phi_{t}(x),\gamma(Cn))\rightarrow0,
\]
where $C$is defined above. In particular $\Phi_{t}(x)\rightarrow[\gamma]\in\partial\mathcal{E}^{2}$
as $t\rightarrow\infty$, for any $x$.
\end{cor}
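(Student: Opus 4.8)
The plan is to apply Theorem~\ref{thm:cat0} to the time-one map $f=\Phi_1$ and then to pass from the iterates $f^n=\Phi_n$ to the full flow. As recalled in this section, $\mathcal{E}^2$ is a complete CAT(0)-space and the weak Calabi flow is the gradient flow of the convex Mabuchi energy $M$, hence a nonexpansive semigroup existing for all $t\ge 0$; in particular $f=\Phi_1$ is a nonexpansive self-map of a complete CAT(0)-space with $f^n=\Phi_n$. Thus Theorem~\ref{thm:cat0} applies once we know the minimal displacement $d_f=\inf_x d(x,f(x))$ is positive, and more precisely that it equals Xia's constant $C$.

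The crux is therefore the identification $d_f=C$. By \cite{GV12} one has $d_f=\tau(f)$, so it suffices to compute the asymptotic speed $\tau(f)=\lim_{n}\tfrac1n d(x,\Phi_n(x))$. Here I would invoke the asymptotics of gradient flows of convex functions: the local slope $|(\partial M)|$ is nonincreasing along $\Phi_t$ and, since geodesic instability prevents a minimizer so that the orbit escapes to infinity, it decreases to the global infimum $\inf_x|(\partial M)(x)|=C$; as the flow speed equals the slope, integrating and using convexity gives $\tfrac1t d(x,\Phi_t(x))\to C$ and hence $\tau(f)=C$. This is precisely where the analytic content of \cite{X21} (the duality $C=\max_\ell (-M(\ell))/\|\ell\|$) and general CAT(0) gradient-flow theory \cite{CaL10,BDL17} enter, rather than pure metric geometry, and I expect it to be the main obstacle.

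Granting $d_f=\tau(f)=C>0$, Theorem~\ref{thm:cat0} yields a unique unit-speed geodesic ray $\gamma$ from $x$ with $\tfrac1n d(\Phi_n(x),\gamma(Cn))\to 0$ and with endpoint $[\gamma]\in\partial\mathcal{E}^2$ independent of $x$. To upgrade to continuous time, set $n=\lfloor t\rfloor$ and $s=t-n\in[0,1)$; by the semigroup law and nonexpansiveness of $\Phi_n$,
\[
d(\Phi_t(x),\Phi_n(x))=d(\Phi_n(\Phi_s(x)),\Phi_n(x))\le d(\Phi_s(x),x)\le K,
\]
where $K:=\sup_{s\in[0,1]}d(\Phi_s(x),x)<\infty$ by continuity of $s\mapsto\Phi_s(x)$. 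Since $\gamma$ has unit speed, a triangle inequality then gives
\[
d(\Phi_t(x),\gamma(Ct))\le K + d(\Phi_n(x),\gamma(Cn)) + C,
\]
and dividing by $t\ge n$ forces $\tfrac1t d(\Phi_t(x),\gamma(Ct))\to 0$. Finally, sublinear tracking of a geodesic ray implies convergence to its boundary point, so $\Phi_t(x)\to[\gamma]$ in the visual bordification for every $x$, completing the argument.
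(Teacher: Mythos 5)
Your proposal is correct and follows essentially the same route as the paper: geodesic instability gives $C>0$, the gradient-flow theory for convex functions on CAT(0)-spaces identifies the linear escape rate with $C$, and Theorem~\ref{thm:cat0} applied to the time-one map does the rest. You simply make explicit two steps the paper leaves implicit, namely the identification $d_f=\tau(f)=C$ and the interpolation from the discrete iterates $\Phi_n$ to the continuous flow $\Phi_t$, both of which you handle correctly.
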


\begin{proof}
The geodesic unstability asserts that the infimum of the gradient
is strictly positive, $C>0$, which implies that the escape rate of
the weak Calabi flow is linear, see \cite{CaL10}. The corollary is
now a consequence of Theorem \ref{thm:cat0}. 
\end{proof}
There are many studies on the geodesics in spaces like $\mathcal{H}$
and $\mathcal{E}^{p}$, see \cite{B22,AhC22} for two recent papers
in the complex setting and references therein. It might therefore
be of interest that the argument I suggest here (from \cite{KM99})
constructs the geodesic from the flow in a way that does not use any
compactness argument. From my point of view, related to Theorem \ref{thm:metricspectral-1}
above, it should also be fruitful to study the dual notion to geodesics,
the metric functionals of $\mathcal{E}^{1}$ and $\mathcal{E}^{2}$.

Apart from the constant scalar curvature problem, automorphisms of
the underlying Kähler manifold act by isometry on the Calabi space
and also fall under Theorem \ref{thm:cat0}:
\begin{cor}
Let $f$ be a complex automorphism of a compact Kähler manifold. If
\[
d:=\inf_{x}d(x,f(x))>0
\]
 for the action on $\mathcal{E}^{2}$, then there is a unique geodesic
ray $\gamma$ from $x$ such that
\[
\frac{1}{n}d(f^{n}(x),\gamma(dn))\rightarrow0,
\]
 and $f$ fixes the corresponding boundary point in the visual bordification
of $M$. 
\end{cor}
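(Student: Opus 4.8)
The plan is to read this corollary as essentially a direct specialization of Theorem \ref{thm:cat0}, with only one small additional observation needed for the claim that $f$ fixes the boundary point. First I would record that a complex automorphism of the compact K\"ahler manifold acts by isometry on the Calabi space, and hence on its metric completion $\mathcal{E}^{2}$; this is precisely the invariance of the Mabuchi--Semmes--Donaldson metric under the automorphism group, as noted just before the statement. Since $\mathcal{E}^{2}$ is a complete CAT(0)-space (via Darvas's description) and every isometry is in particular nonexpansive, the map $f$ meets the hypotheses of Theorem \ref{thm:cat0}.

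Next, the assumption $d:=\inf_{x}d(x,f(x))>0$ is exactly the condition $d_{f}>0$ in the second case of Theorem \ref{thm:cat0}. Applying that case verbatim produces, for the chosen base point $x$, a unique unit-speed geodesic ray $\gamma$ from $x$ with
\[
\frac{1}{n}d(f^{n}(x),\gamma(dn))\rightarrow0,
\]
and, as the theorem also records, $f^{n}(x)$ converges to the class $[\gamma]$ in the visual bordification. Internally this rests on \cite{GV12}, which identifies the minimal displacement $d_{f}$ with the translation number $\tau_{f}$ so that $\tau_{f}=d>0$, and on \cite{KM99}, which constructs the tracking ray; but all of this is already packaged inside Theorem \ref{thm:cat0}, so I would simply cite it.

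It then remains to verify that $f$ fixes $[\gamma]$. Here I would use that an isometry of a complete CAT(0)-space sends geodesic rays to geodesic rays and preserves the asymptoty relation, hence induces a homeomorphism of the visual bordification in the cone topology. Granting continuity of this extension, the established convergence $f^{n}(x)\rightarrow[\gamma]$ gives
\[
f([\gamma])=f\bigl(\lim_{n\rightarrow\infty}f^{n}(x)\bigr)=\lim_{n\rightarrow\infty}f^{n+1}(x)=[\gamma],
\]
since the shifted orbit has the same limit. I expect this last step to be the only genuinely nonroutine point: because $\mathcal{E}^{2}$ is infinite-dimensional and not proper, one cannot invoke the usual compactness of the cone-topology compactification, so the continuity of the boundary extension should be justified directly from the definition of the bordification as equivalence classes of rays, rather than quoted from the proper case. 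Once that is in hand the corollary follows at once, with the uniqueness and the sublinear tracking inherited unchanged from Theorem \ref{thm:cat0}.
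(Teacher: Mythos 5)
Your proposal is correct and follows essentially the same route as the paper: the sublinear tracking and uniqueness are quoted directly from Theorem \ref{thm:cat0}, and the fixed-point-at-infinity claim is obtained by comparing the limit of $f^{n}(x)$ with that of the shifted orbit $f^{n+1}(x)$, which lies at bounded distance from it. The paper's proof is exactly this one-line observation (citing \cite{K22}) that two convergent sequences at bounded distance determine the same class of geodesic ray, which also disposes of the continuity worry you raise, since the boundary action of an isometry is defined directly on equivalence classes of rays.
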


\begin{proof}
For CAT(0)-spaces any boundary limit point of the orbit is fixed by
$f$ as is well known, see for example \cite{K22}, since two sequences
of bounded distance from each other converge to the same equivalence
class of geodesic ray when they converge. 
\end{proof}
Even in the case $d=0$, it holds that $f$ fixes a metric functional
of $\mathcal{E}^{p}$ (\cite{K22}). A condition like $\inf_{x}d(x,f(x))=0$
where $f$ is a diffeomorphism of an underlying compact manifold acting
instead by isometry on a space of metrics was proposed by D'Ambra
and Gromov in \cite{DG91} as \emph{(quasi-) unipotency} of $f$.

\noun{Question J: }\emph{How to describe or understand the metric
functionals for the Calabi-Mabuchi space and what do the relevant
results discussed in this paper imply concretely for the Calabi flow,
Donaldson's conjecture, and automorphisms of the Kähler manifold?}

A final remark is that in the standard visual bordification, the stars
are identified in \cite[Proposition 25]{K05} as:
\[
S(\xi)=S^{x_{0}}(\xi)=\left\{ \eta:\angle(\eta,\xi)\leq\pi/2\right\} =S^{\vee}(\xi),
\]
where $\angle$ denotes the Tits angle between two geodesic rays and
is symmetric in its arguments.
\begin{example*}
The stars for Euclidean spaces are half-spheres (the Tits angle coincides
with the usual notion of angle) and for hyperbolic spaces they are
points (since any two boundary points can be joined by a geodesic
line giving the angle $\pi$).
\end{example*}

\section{Random iteration}

The above discussion has involved iterations $f^{n}$ of a holomorphic
self-map $f$. In some contexts one meets the generalization to the
composition of several different holomorphic maps. When looking at
the asymptotics one can compose them in two ways, forward or backward.
The latter, i.e.
\[
R_{n}=f_{1}\circ f_{2}\circ...\circ f_{n}
\]
behaves best when studying individual orbits and appears for example
in the theory of continued fractions. Indeed, a continued fraction
expansion of a number is exactly such an expression where $f_{i}(z)=a_{i}/(b_{i}+z$)
are certain Möbius maps, and letting $n\rightarrow\infty$. Other
examples considered by Ramanujan and Polya-Szegö are infinite radicals
($f_{i}(z)=\sqrt{a_{i}z+b_{i}}$) and iterated exponentials ($f_{i}(z)=a_{i}^{z}$),
see \cite{Lo99} for more details. In these contexts one considers
the limit of the corresponding $R_{n}(0)$ as $n\rightarrow\infty$.
There is also a connection to Nevanlinna-Pick interpolation.

Some papers on this topic, see for example \cite{BCMN04,AC22,JS22},
take arbitrary sequences of maps (like in iterated function systems
and the theory of fractals) and sometimes call them \emph{random iteration}.
Here we will only discuss $R_{n}$ for actually randomly selected
holomorphic maps $f_{i}$. 

We formalize the setting as follows, more general than the usual \emph{random}
assumption of independently, identically distributed selected maps.
Let $(T,\Omega,\mu)$ be an ergodic measure preserving system with
$\mu(\Omega)=1$. Given a measurable map $f:\Omega\rightarrow G$
into a semigroup, we define the following \emph{ergodic cocycle}:
$R(n,\omega)=f(\omega)f(T\omega)...f(T^{n-1}\omega)$ or in probabilistic
notation leaving out the measure space: $R_{n}=f_{1}f_{2}...f_{n}$.
It is \emph{integrable }if 
\[
\int_{\Omega}d(x,f(\omega)x)d\mu<\infty
\]
for some $x\in X.$ Then by a well-known consequence of Kingman's
subadditive ergodic theorem, the limit exist 
\[
\tau:=\inf_{n}\frac{1}{n}\int d(x,R(n,\omega)x)d\mu=\lim_{n\rightarrow\infty}\frac{1}{n}d(x,R(n,\omega)x)
\]
for almost every $\omega$ and by ergodicity $\tau$ is independent
of $\omega.$ 

Recall the notion of visibility domain from section \ref{sec:Geodesics-and-boundary}. 
\begin{thm}
\label{thm:randomvisible}Let $R_{n}=f_{1}f_{2}...f_{n}$ be an integrable
ergodic cocycle of holomorphic self-maps of a bounded domain $X$
in $\mathbb{C}^{N}$ that is a visibility domain with the Kobayashi
distance $d$. Then almost surely it holds that unless
\[
\frac{1}{n}d(x,R_{n}x)\rightarrow0
\]
as $n\rightarrow\infty$, there is a random point $\xi\in\partial X$
such that 
\[
R_{n}x\rightarrow\xi
\]
 $n\rightarrow\infty$, for any $x\in X.$ 
\end{thm}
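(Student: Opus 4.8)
The plan is to run the visibility argument of Proposition \ref{prop:visib} along the random orbit, feeding the ergodic structure in through the subadditive cocycle. First I would dispose of the trivial alternative: if $\tau>0$ (the negation of $\frac1n d(x,R_nx)\to0$), then since $d(x_0,R_nx_0)\sim\tau n\to\infty$ by Kingman's theorem, the orbit $R_nx_0$ eventually leaves every compact subset of $X$ and so accumulates only on $\partial X$. Because $\overline{X}\subset\mathbb{C}^{N}$ is metrizable it is first countable, so the Duchin--Fisher sequence criterion applies; and because $X$ is a visibility domain, Theorem \ref{thm:vishyp} gives $S(\xi)=\{\xi\}$ for every $\xi\in\partial X$. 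It therefore suffices to show that any two accumulation points of $R_nx_0$ coincide, and then that the common limit is independent of the starting point.

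The heart of the matter is to show that for any two accumulation points $\xi=\lim_i R_{n_i}x_0$ and $\eta=\lim_j R_{k_j}x_0$ one has $\xi\in S(\eta)$. I would pair off times with $k_j<n_i$ and verify the sequence-criterion inequality with $x_m=R_{k_{j_m}}x_0\to\eta$ and $y_m=R_{n_{i_m}}x_0\to\xi$. Writing $R_{n}=R_{k}\circ R(n-k,T^{k}\omega)$ and using nonexpansiveness of $R_k$ gives the cocycle bound
\[
d(R_{n_i}x_0,R_{k_j}x_0)\le d\bigl(x_0,R(n_i-k_j,T^{k_j}\omega)x_0\bigr)=a(n_i-k_j,T^{k_j}\omega),
\]
while $d(y_m,x_0)=a(n_i,\omega)\sim\tau n_i$. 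Since the tail length $a(n_i-k_j,T^{k_j}\omega)$ is, for a suitable choice of times, comparable to $\tau(n_i-k_j)\le\tau n_i$, the inequality $d(y_m,x_m)\le d(y_m,x_0)+C$ holds, whence $\xi\in S(\eta)=\{\eta\}$ and $\xi=\eta$. This forces $R_nx_0\to\xi$ for a single $\xi\in\partial X$. The mechanism producing ``a suitable choice of times'' is the ergodic refinement of Theorem \ref{thm:metricspectral-1}: the Karlsson--Ledrappier regular times, along which the initial orbit segment from $x_0$ to $R_{n}x_0$ is, after rescaling by $\tau$, a $(1,\kappa)$-almost-geodesic in the sense of \cite{BZ17}; visibility then pins its far endpoint to a single boundary point exactly as in Proposition \ref{prop:visib}.

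For independence of the base point, note that $d(R_nx,R_nx')\le d(x,x')$ is bounded; so if $R_nx\to\xi$ and $R_{n_j}x'\to\eta$, then applying the sequence criterion to $y_j=R_{n_j}x'$ and $x_j=R_{n_j}x$ (with $d(y_j,x_0)\to\infty$ absorbing the bounded gap) yields $\eta\in S(\xi)=\{\xi\}$, so $\eta=\xi$; thus the random limit $\xi=\xi(\omega)$ does not depend on $x$. The main obstacle is the step glossed over in the middle paragraph: obtaining a uniform upper bound $a(m,T^{l}\omega)\le(\tau+\epsilon)m+C$ for the tail cocycle over the relevant ranges of $l$ and $m$. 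Kingman's theorem gives $a(m,\omega)/m\to\tau$ pointwise but not uniformly over the shifts $T^l\omega$, and crude term-by-term estimation only produces the weaker constant $\int_\Omega d(x_0,f(\omega)x_0)\,d\mu\ge\tau$. Controlling this is precisely the content of the Karlsson--Ledrappier regularity, which isolates the subsequence of times along which the tail grows at the optimal rate $\tau$; once those times are secured, matching the resulting lower bounds against the $(1,\kappa)$-almost-geodesics of Bharali--Zimmer and invoking visibility closes the argument.
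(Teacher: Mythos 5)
Your proposal is correct and follows essentially the same route as the paper's proof: both arguments rest on the Karlsson--Margulis regular times of \cite{KM99} (which, combined with nonexpansiveness, give $d(R_{n_i}x,R_kx)\le d(x,R(n_i-k,T^k\omega)x)\le d(x,R_{n_i}x)-(\tau-\epsilon)k$ for $K<k<n_i$), and both then use visibility to force all boundary accumulation points to coincide. The only difference is packaging --- the paper bounds the Gromov product along Bharali--Zimmer almost-geodesics directly rather than routing through Theorem \ref{thm:vishyp} and the sequence criterion --- and note that the regular-times inequality already supplies the additive constant $C$ required by the sequence criterion exactly (indeed one may take $C=0$), which is stronger than the mere ``comparability'' your middle paragraph worries about.
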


\begin{proof}
Fix $x\in X$. Assume that for a.e. $\omega$
\[
\frac{1}{n}d(x,R(n,\omega)x)\rightarrow\tau>0.
\]
Take $0<\epsilon<\tau.$ By \cite[Proposition 4.2]{KM99}, for a.e.
$\omega,$ there is a sequence of $n_{i}\rightarrow\infty$ and $K$
such that 
\[
d(x,R(n_{i},\omega)x)-d(x,R(n_{i}-k,T^{k}\omega)x)\geq(\tau-\epsilon)k
\]
for all $K<k<n_{i}$. Note that $d(R(n_{i},\omega)x,R(k,\omega)x)\leq d(x,R(n_{i}-k,T^{k}\omega)x)$
by the nonexpansive property. This implies that 
\[
d(x,R(n_{i},\omega)x)+d(x,R(k,\omega)x)-d(R(n_{i},\omega)x,R(k,\omega)x)\ge(\tau-\epsilon)k
\]
for all $K<k<n_{i}$ (for large $k$ we could even insert a $2$ on
the right hand side). Hence the left hand side tends to infinity as
$k<n_{i}\rightarrow\infty$. By compactness we may assume that $R(n_{i},\omega)x\rightarrow\xi$
some point $\xi=\xi(\omega)\in\partial X$. 

Now suppose that for some subsequence $k_{j}$, $R(k_{j},\omega)x\rightarrow\eta$
for some other boundary point $\eta$. Fix two disjoint closed neighborhoods
$V$ and $W$ of $\xi$ and $\eta$ respectively. Consider all large
enough $j$ such that $R(k_{j},\omega)x\in W$ and take a corresponding
$i_{j}$ such that $k_{j}<n_{i_{j}}$ and $R(n_{i_{j}},\omega)x\in V$,
and take an almost geodesic $\sigma_{j}$ joining these two orbit
points. By the visibility assumption there is a $C$ independent of
$j$ such that there exists $t_{j}$ for which 
\[
d(x,\sigma_{j}(t_{j}))<C,
\]
for all large $j$. Since $\sigma_{j}$ is an almost geodesic we have
\[
d(R(k_{j},\omega)x,\sigma_{j}(t_{j}))+d(\sigma_{j}(t_{j}),R(n_{i_{j}},\omega)x)\leq d(R(n_{i_{j}},\omega)x,R(k_{j},\omega)x)+3\kappa.
\]
Therefore by the triangle inequality
\[
d(R(n_{i_{j}},\omega)x,R(k_{j},\omega)x)>d(x,R(n_{i_{j}},\omega)x)+d(x,R(k_{j},\omega)x)-2C-3\kappa.
\]
So for an infinitude of $n_{i}$ and $k$s we have
\[
d(x,R(n_{i},\omega)x)+d(x,R(k,\omega)x)-d(R(n_{i},\omega)x,R(k,\omega)x)<2C+3\kappa,
\]
 but this contradicts the previous estimate. The conclusion follows.
\end{proof}
Without the visibility assumption, a more general result (in view
of Theorem \ref{thm:vishyp}) holds corresponding to Theorem \ref{thm:WDstar}:
\begin{thm}
\cite[Theorem 18]{K05}Let $(X,d)$ be a proper metric space, $\overline{X}$
a compactification, and $R_{n}=R(n,\omega)$ an integrable ergodic
cocycle. Assume that $\tau>0.$ Then for a.e. $\omega$ there is a
boundary point $\xi=\xi(\omega)$ such that 
\[
R_{n}x\rightarrow S^{\vee}(\xi)
\]
as $n\rightarrow\infty.$
\end{thm}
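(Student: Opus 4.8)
The plan is to run the scheme from the proof of Theorem~\ref{thm:randomvisible}, but to replace the visibility geometry (almost-geodesics meeting a fixed compact set) by the intrinsic halfspace definition of the stars, so that the conclusion lands in $S^{\vee}(\xi)$ rather than at a single point. Fix the base point $x_{0}$. Since $\tau>0$ we have $d(x_{0},R(n,\omega)x_{0})\to\infty$ for a.e.\ $\omega$, so the orbit is unbounded and, by properness, all of its accumulation points lie in $\partial X$. Choose $0<\epsilon<\tau$. The ergodic-theoretic engine is again \cite[Proposition 4.2]{KM99}, a consequence of Kingman's subadditive ergodic theorem: for a.e.\ $\omega$ it supplies a ``record'' sequence $n_{i}\to\infty$ and a constant $K$ with
\[
d(x_{0},R(n_{i},\omega)x_{0})-d(x_{0},R(n_{i}-k,T^{k}\omega)x_{0})\ge(\tau-\epsilon)k
\]
for all $K<k<n_{i}$.

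Next I would use the cocycle identity $R(n_{i},\omega)=R(k,\omega)\circ R(n_{i}-k,T^{k}\omega)$ together with the nonexpansiveness of $R(k,\omega)$, giving $d(R(n_{i},\omega)x_{0},R(k,\omega)x_{0})\le d(x_{0},R(n_{i}-k,T^{k}\omega)x_{0})$. Inserting this into the record inequality yields
\[
d(R(n_{i},\omega)x_{0},R(k,\omega)x_{0})\le d(x_{0},R(n_{i},\omega)x_{0})-(\tau-\epsilon)k
\]
for all $K<k<n_{i}$. Passing to a subsequence and using compactness of $\overline{X}$, I may assume $R(n_{i},\omega)x_{0}\to\xi\in\partial X$; this $\xi=\xi(\omega)$ is the claimed random boundary point, and the next step will show it can be chosen independently of the orbit's starting point.

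It then remains to show that every accumulation point $\eta$ of the orbit lies in $S^{\vee}(\xi)$, i.e.\ that $\xi\in S(\eta)$. Suppose $R(k_{j},\omega)x\to\eta$ for some $x\in X$ and some subsequence $k_{j}\to\infty$. For each large $j$ I would choose a record index $i(j)$ with $n_{i(j)}>k_{j}$ and $i(j)\to\infty$, so that $b_{j}:=R(n_{i(j)},\omega)x_{0}\to\xi$; set $a_{j}:=R(k_{j},\omega)x$. Combining the displayed inequality with $d(R(k_{j},\omega)x_{0},a_{j})\le d(x_{0},x)$ gives, for every neighborhood $V$ of $\eta$ and all large $j$ (so that $a_{j}\in V\cap X$),
\[
d(b_{j},V)\le d(b_{j},a_{j})\le d(b_{j},x_{0})+d(x_{0},x),
\]
that is $b_{j}\in H(V,d(x_{0},x))$. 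Letting $j\to\infty$ gives $\xi\in\overline{H(V,d(x_{0},x))}$; as $V$ ranges over $\mathcal{V}_{\eta}$ with the same constant, $\xi\in\bigcap_{V\in\mathcal{V}_{\eta}}\overline{H(V,d(x_{0},x))}\subset S(\eta)$. This is exactly the sequence criterion of Duchin--Fisher made explicit, and because $b_{j}$ is built from the fixed base point $x_{0}$ while $a_{j}$ uses the arbitrary $x$, the same $\xi$ works for every starting point; hence $R_{n}x\to S^{\vee}(\xi)$ for all $x$.

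I expect the main obstacle to be the bookkeeping in this last step: one must pair each $k_{j}$ with a record index $n_{i(j)}$ so that the companion points $b_{j}$ still converge to the \emph{fixed} $\xi$, and one must check that changing the orbit's base point only perturbs the halfspace constant by the bounded amount $d(x_{0},x)$. This base-point robustness is precisely what the single star formulation buys us, and it is what frees the conclusion from the geodesic/visibility hypotheses of Theorem~\ref{thm:randomvisible}. All of the probabilistic difficulty is absorbed into \cite[Proposition 4.2]{KM99}, so beyond invoking it the remaining content is this geometric translation into the language of stars and dual stars.
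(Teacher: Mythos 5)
Your proof is correct and is precisely the adaptation the paper has in mind: the paper only remarks that ``the proof is similar to'' that of Theorem \ref{thm:randomvisible}, and you carry out that adaptation by keeping the record-time inequality from \cite[Proposition 4.2]{KM99} and the cocycle/nonexpansiveness step, but replacing the visibility contradiction with a direct verification that the points $R(n_{i},\omega)x_{0}\to\xi$ lie in the halfspaces $H(V,d(x_{0},x))$ for every neighborhood $V$ of an accumulation point $\eta$, which is exactly $\xi\in S(\eta)$, i.e.\ $\eta\in S^{\vee}(\xi)$. The only point to watch is that extracting the convergent subsequence $R(n_{i},\omega)x_{0}\rightarrow\xi$ and describing accumulation points via subsequences tacitly uses sequential compactness of $\overline{X}$ (assumed explicitly in Theorem \ref{thm:WDstar}); for a general compactification the same argument should be phrased with subnets.
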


The proof is similar to the previously stated theorem. It is not so
easy in general to determine when the linear rate of escape $\tau$
is $0$ or strictly positive. The difference in escape rate can be
exemplified by Brownian motion in euclidean spaces ($\tau=0$ and
no directional convergence) and hyperbolic spaces ($\tau>0$ and asymptotic
convergence). Another result that ultimately might prove to be yet
more general, is the following random version of Theorem \ref{thm:metricspectral-1}
above:
\begin{thm}
\label{thm:ergodic-1-1}\emph{(Ergodic theorem for noncommuting random
operations, \cite{KL06,GK20}) }Let $R_{n}=R(n,\omega)$ be an integrable
ergodic cocycle of nonexpansive maps of a metric space $(X,d)$ assuming
everything is measurable. Then there exists a.s. a metric functional
$h=h^{\omega}$ of $X$ such that
\[
\lim_{n\rightarrow\infty}-\frac{1}{n}h(R_{n}x)=\tau.
\]
\end{thm}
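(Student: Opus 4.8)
The plan is to mimic the deterministic argument behind Theorem \ref{thm:metricspectral-1} and the proof of Theorem \ref{thm:randomvisible}, replacing the single map $f$ by the cocycle and using Kingman's theorem for the a.s. control of the drift. Write $a(n,\omega)=d(x_0,R(n,\omega)x_0)$. Since $R(n+m,\omega)=R(n,\omega)R(m,T^{n}\omega)$ and each $R(n,\omega)$ is nonexpansive, the triangle inequality gives the subadditive cocycle relation $a(n+m,\omega)\le a(n,\omega)+a(m,T^{n}\omega)$. By integrability, Kingman's subadditive ergodic theorem yields $\tfrac1n a(n,\omega)\to\tau$ for a.e. $\omega$, with $\tau$ constant by ergodicity; this is the same $\tau$ as in the statement.

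First I would record the easy half. Every metric functional satisfies $h(y)\ge -d(x_0,y)$, so $h(R(n,\omega)x_0)\ge -a(n,\omega)$ and hence $\limsup_n -\tfrac1n h(R(n,\omega)x_0)\le\lim_n \tfrac1n a(n,\omega)=\tau$ for \emph{any} metric functional $h$. Thus the whole content is to exhibit a functional for which the corresponding $\liminf$ is at least $\tau$. For this I would invoke the record lemma in its cocycle form \cite{KM99} (the same input used in Theorem \ref{thm:randomvisible}): for a.e. $\omega$ and each $\epsilon>0$ there are times $n_i=n_i(\omega)\to\infty$ with
\[
a(n_i,\omega)-a(n_i-k,T^{k}\omega)\ge(\tau-\epsilon)k
\]
for all $k$ in a range up to $n_i$. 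Combining this with nonexpansiveness, $d(R(k,\omega)x_0,R(n_i,\omega)x_0)\le a(n_i-k,T^{k}\omega)$, so the metric functional $h_{R(n_i,\omega)x_0}$ evaluated at the earlier orbit point $R(k,\omega)x_0$ is at most $-(\tau-\epsilon)k$. Passing to a limit in the compact metric compactification $\overline{X}^{h}$ along the $n_i$ produces a functional $h_\epsilon$ with $h_\epsilon(R(k,\omega)x_0)\le-(\tau-\epsilon)k$ for all large $k$, whence $\liminf_k -\tfrac1k h_\epsilon(R(k,\omega)x_0)\ge\tau-\epsilon$.

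It then remains to pass from $\tau-\epsilon$ to $\tau$ with a \emph{single} functional, and this is where I expect the main difficulty to lie. Letting $\epsilon\to0$ naively fails, because the range in which the record estimate is valid only begins beyond a threshold that grows as $\epsilon\to0$; consequently a plain limit over $\epsilon$ of the $h_\epsilon$ loses the pointwise bound at each fixed $k$, and the two limits $\epsilon\to0$ and $n_i\to\infty$ do not commute. This interchange is exactly the point resolved by the Gouëzel--Karlsson refinement \cite{GK20}, whose technical heart is a selection of record times at which the linear-separation estimate holds essentially on the full range of $k$ with a uniformly vanishing error rate, thereby eliminating the $\epsilon$-dependent threshold. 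With such times, a single diagonal limit of the functionals $h_{R(n_i,\omega)x_0}$ yields a metric functional $h=h^{\omega}$ with $h(R(k,\omega)x_0)\le-\tau k+o(k)$, so that $\liminf_k -\tfrac1k h(R_k x_0)\ge\tau$; together with the free upper bound this forces $\lim_n -\tfrac1n h(R_n x_0)=\tau$.

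Finally I would remove the dependence on the base point and address measurability. Since orbits from different points stay within bounded distance, $d(R(n,\omega)x,R(n,\omega)x')\le d(x,x')$, and metric functionals are $1$-Lipschitz, the quantity $-\tfrac1n h(R_n x)$ has the same limit $\tau$ for every $x\in X$. A measurable selection then makes $\omega\mapsto h^{\omega}$ measurable, completing the a.s. statement. The structural steps (Kingman, the free upper bound, the functional limit at record times) are routine once Theorem \ref{thm:metricspectral-1} is understood; the genuine obstacle is the exact rate, for which the threshold-free record selection of \cite{GK20} is the essential input.
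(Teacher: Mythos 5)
The paper does not actually prove this theorem: it is quoted from \cite{KL06,GK20}, accompanied only by the remark that the isometry case uses the extension of the maps to the metric compactification while the general nonexpansive case ``uses intricate subadditive ergodic theory.'' Your outline is a faithful reconstruction of that second route and all the steps you do carry out are correct: the subadditive cocycle relation and Kingman give the drift $\tau$; the bound $h(y)\geq -d(x_{0},y)$ gives the free inequality $\limsup_{n} -\tfrac{1}{n}h(R_{n}x_{0})\leq\tau$ for every metric functional; the record-time estimate of \cite{KM99} together with $d(R(k,\omega)x_{0},R(n_{i},\omega)x_{0})\leq a(n_{i}-k,T^{k}\omega)$ and compactness of $\overline{X}^{h}$ produces, for each $\epsilon>0$, a functional achieving $\tau-\epsilon$; and you correctly diagnose that the sole remaining obstacle is that the $\epsilon$-dependent threshold prevents the limits $\epsilon\to0$ and $n_{i}\to\infty$ from being interchanged. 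The one caveat is that the step you defer to \cite{GK20} --- the a.s.\ existence of times $n_{i}$ at which the linear separation holds over essentially the full range of $k$ with a uniformly vanishing error --- is not a refinement but the entire technical content of that paper's subadditive ergodic theorem; invoking it is exactly what the text under review does, so your proposal is complete modulo the same citation, but one should not expect to fill that step in a few lines.
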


The proof in the isometry case uses the extension of the maps to the
metric compactification, while the general case instead uses intricate
subadditive ergodic theory. How to deduce results like Theorem \ref{thm:randomvisible}
from this latter result is explained in the proof of \cite[Corollary 5.2]{GK20}.
This reference also contains a result on the behavior of the Ahlfors-Beurling
extremal length under the random iteration of holomorphic maps of
Teichmüller spaces.
\begin{example*}
Let $X$ be the Teichmüller space associated to a higher genus closed
orientable surface. Let $R_{n}$ be a non-degenerate random walk on
the group of its complex automorphisms (which by a theorem of Royden
coincides with the mapping class group in topology). The Teichmüller
metric coincides with the Kobayashi metric (again by Royden). The
metric compactification coincides with the Gardiner-Masur compactification
of Teichmüller space \cite{LS14}. The group in question acts properly
on this space that has at most exponential growth and is a non-amenable
group, it then follows from a theorem by Guivarch that then the escape
rate is $\tau>0$. Therefore every random walk converges to a dual
star at infinity of Teichmüller space. See \cite{K14} for more details
and references. Note that \cite{DF21} investigated the stars in the
Thurston compactification, while for this other complex analytic boundary
there has so far not appeared any study of its stars.
\end{example*}

\lyxaddress{Section de mathématiques, Université de Genève, Case postale 64,
1211 Genève, Switzerland; Mathematics department, Uppsala University,
Box 256, 751 05 Uppsala, Sweden.}

\begin{thebibliography}{AFGG22}
\bibitem[A91]{A91}Abate, Marco, Iteration theory, compactly divergent
sequences and commuting holomorphic maps. Ann. Scuola Norm. Sup. Pisa
Cl. Sci. (4) 18 (1991), no. 2, 167\textendash 191.

\bibitem[AR14]{AR14}Abate, Marco; Raissy, Jasmin, Wolff-Denjoy theorems
in nonsmooth convex domains. Ann. Mat. Pura Appl. (4) 193 (2014),
no. 5, 1503\textendash 1518.

\bibitem[AC22]{AC22}Abate, Marco; Christodoulou, Argyrios, Random
iteration on hyperbolic Riemann surfaces. Ann. Mat. Pura Appl. (4)
201 (2022), no. 4, 2021\textendash 2035. 

\bibitem[AhC22]{AhC22}Åhag, Per; Czyz, Rafel, Geodesics in the Space
of m-Subharmonic Functions With Bounded Energy, IMRN, 2022, https://doi.org/10.1093/imrn/rnac129

\bibitem[AB16]{AB16}Arosio, Leandro; Bracci, Filippo, Canonical models
for holomorphic iteration. Trans. Amer. Math. Soc. 368 (2016), no.
5, 3305\textendash 3339.

\bibitem[AFGG22]{AFGG22}Arosio, L.; Fiacchi, M.; Gontard, S.; Guerini,
L. The horofunction boundary of a Gromov hyperbolic space, Math. Ann.
(2022). https://doi.org/10.1007/s00208-022-02551-0 

\bibitem[AFGK22]{AFGK22}Arosio, L.; Fiacchi, M.; Guerini, L.; Karlsson,
A.; Backward dynamics of non-expanding maps in Gromov hyperbolic spaces,
arXiv:2210.17480

\bibitem[BB00]{BB00}Balogh, Zoltán M.; Bonk, Mario, Gromov hyperbolicity
and the Kobayashi metric on strictly pseudoconvex domains. Comment.
Math. Helv. 75 (2000), no. 3, 504\textendash 533.

\bibitem[Be90]{Be90}Beardon, A. F. Iteration of contractions and
analytic maps. J. London Math. Soc. (2) 41 (1990), no. 1, 141\textendash 150. 

\bibitem[BCMN04]{BCMN04}Beardon, A. F.; Carne, T. K.; Minda, D.;
Ng, T. W. Random iteration of analytic maps. Ergodic Theory Dynam.
Systems 24 (2004), no. 3, 659\textendash 675.

\bibitem[BDL17]{BDL17}Berman, Robert J.; Darvas, Tamás; Lu, Chinh
H. Convexity of the extended K-energy and the large time behavior
of the weak Calabi flow. Geom. Topol. 21 (2017), no. 5, 2945\textendash 2988.

\bibitem[B22]{B22}Berndtsson, B. Long geodesics in the space of Kähler
metrics. Anal. Math. 48 (2022), no. 2, 377\textendash 392.

\bibitem[BM21]{BM21}Bharali, Gautam; Maitra, Anwoy, A weak notion
of visibility, a family of examples, and Wolff-Denjoy theorems. Ann.
Sc. Norm. Super. Pisa Cl. Sci. (5) 22 (2021), no. 1, 195\textendash 240.

\bibitem[BZ17]{BZ17}Bharali, Gautam; Zimmer, Andrew, Goldilocks domains,
a weak notion of visibility, and applications. Adv. Math. 310 (2017),
377\textendash 425.

\bibitem[BC09]{BC09}Blanc-Centi, Léa, On the Gromov hyperbolicity
of the Kobayashi metric on strictly pseudoconvex regions in the almost
complex case. Math. Z. 263 (2009), no. 3, 481\textendash 498.

\bibitem[BG20]{BG20}Bracci, Filippo; Gaussier, Hervé, Horosphere
topology. Ann. Sc. Norm. Super. Pisa Cl. Sci. (5) 20 (2020), no. 1,
239\textendash 289.

\bibitem[BG22]{BG22}Bracci, F.; Gaussier, H. Abstract boundaries
and continuous extension of biholomorphisms. Anal. Math. 48 (2022),
no. 2, 393\textendash 409.

\bibitem[BGZ21]{BGZ21}Bracci, Filippo; Gaussier, Hervé; Zimmer, Andrew,
Homeomorphic extension of quasi-isometries for convex domains in \ensuremath{\mathbb{C}}\textasciicircum{}d
and iteration theory. Math. Ann. 379 (2021), no. 1-2, 691\textendash 718.

\bibitem[BNT22]{BNT22}Bracci, Filippo; Nikolov, Nikolai; Thomas,
Pascal J. Visibility of Kobayashi geodesics in convex domains and
related properties. Math. Z. 301 (2022), no. 2, 2011\textendash 2035.

\bibitem[CC02]{CC02}Calabi, E.; Chen, X. X. The space of Kähler metrics.
II. J. Differential Geom. 61 (2002), no. 2, 173\textendash 193

\bibitem[CL21]{CL21}Capogna, Luca; Le Donne, Enrico, Conformal equivalence
of visual metrics in pseudoconvex domains. Math. Ann. 379 (2021),
no. 1-2, 743\textendash 763.

\bibitem[CaL10]{CaL10}Caprace, Pierre-Emmanuel; Lytchak, Alexander
At infinity of finite-dimensional CAT(0) spaces. Math. Ann. 346 (2010),
no. 1, 1\textendash 21. 

\bibitem[CS14]{CS14}Chen, Xiuxiong; Sun, Song Calabi flow, geodesic
rays, and uniqueness of constant scalar curvature Kähler metrics.
Ann. of Math. (2) 180 (2014), no. 2, 407\textendash 454.

\bibitem[CCh21]{CCh21}Chen, Xiuxiong; Cheng, Jingrui On the constant
scalar curvature Kähler metrics (II)\textemdash Existence results.
J. Amer. Math. Soc. 34 (2021), no. 4, 937\textendash 1009.

\bibitem[Ch92]{Ch92}Cho, Sanghyun, A lower bound on the Kobayashi
metric near a point of finite type in Cn. J. Geom. Anal. 2 (1992),
no. 4, 317\textendash 325.

\bibitem[DG91]{DG91}D'Ambra, G.; Gromov, M. Lectures on transformation
groups: geometry and dynamics. Surveys in differential geometry (Cambridge,
MA, 1990), 19\textendash 111, Lehigh Univ., Bethlehem, PA, 1991

\bibitem[Do05]{Do05}Donaldson, S. K. Lower bounds on the Calabi functional.
J. Differential Geom. 70 (2005), no. 3, 453\textendash 472. 

\bibitem[DF21]{DF21}Duchin, Moon; Fisher, Nate, Stars at infinity
in Teichmüller space. Geom. Dedicata 213 (2021), 531\textendash 545. 

\bibitem[FK05]{FK05}Foertsch, Thomas; Karlsson, Anders, Hilbert metrics
and Minkowski norms. J. Geom. 83 (2005), no. 1-2, 22\textendash 31.

\bibitem[GV12]{GV12}Gaubert, Stéphane; Vigeral, Guillaume, A maximin
characterisation of the escape rate of non-expansive mappings in metrically
convex spaces. Math. Proc. Cambridge Philos. Soc. 152 (2012), no.
2, 341\textendash 363.

\bibitem[GZ22]{GZ22}Gaussier, Hervé; Zimmer, Andrew, A metric analogue
of Hartogs' theorem. Geom. Funct. Anal. 32 (2022), no. 5, 1041\textendash 1062.

\bibitem[GK20]{GK20}Gouëzel, S; Karlsson, A. Subadditive and multiplicative
ergodic theorems, J. Eur. Math. Soc. 22 (2020), no. 6, 1893\textendash 1915. 

\bibitem[H16]{H16}Horbez, Camille, The horoboundary of outer space,
and growth under random automorphisms. Ann. Sci. Éc. Norm. Supér.
(4) 49 (2016), no. 5, 1075\textendash 1123.

\bibitem[HW21]{HW21}Huczek, A.; Wi\'{s}nicki, A. Wolff-Denjoy theorems
in geodesic spaces. Bull. Lond. Math. Soc. 53 (2021), no. 4, 1139\textendash 1158.

\bibitem[JS22]{JS22}Jacques, Matthew; Short, Ian Semigroups of isometries
of the hyperbolic plane. Int. Math. Res. Not. IMRN 2022, no. 9, 6403\textendash 6463. 

\bibitem[JK22]{JK22}Jones, Keith; Kelsey, Gregory A. On the asymmetry
of stars at infinity. Topology Appl. 310 (2022), Paper No. 108016,
10 pp.

\bibitem[K01]{K01}Karlsson, Anders, Non-expanding maps and Busemann
functions. Ergodic Theory Dynam. Systems 21 (2001), no. 5, 1447\textendash 1457.

\bibitem[K05]{K05}Karlsson, Anders, On the dynamics of isometries.
Geom. Topol. 9 (2005), 2359\textendash 2394. 

\bibitem[K14]{K14}Karlsson, Anders, Two extensions of Thurston's
spectral theorem for surface diffeomorphisms. Bull. Lond. Math. Soc.
46 (2014), no. 2, 217\textendash 226.

\bibitem[K21]{K21b}Karlsson, Anders, From linear to metric functional
analysis. Proc. Natl. Acad. Sci. USA 118 (2021), no. 28, Paper No.
e2107069118, 5 pp.

\bibitem[K22a]{K18}Karlsson, Anders, Elements of a metric spectral
theory. Dynamics, geometry, number theory\textemdash the impact of
Margulis on modern mathematics, 276\textendash 300, Univ. Chicago
Press, Chicago, IL, 2022

\bibitem[K22b]{K22}Karlsson, Anders, A metric fixed point theorem
and some of its applications, arXiv:2207.00963v2.

\bibitem[KL06]{KL06}Karlsson, Anders; Ledrappier, François, On laws
of large numbers for random walks. Ann. Probab. 34 (2006), no. 5,
1693\textendash 1706.

\bibitem[KM99]{KM99}Karlsson, Anders; Margulis, Gregory A. A multiplicative
ergodic theorem and nonpositively curved spaces. Comm. Math. Phys.
208 (1999), no. 1, 107\textendash 123.

\bibitem[KT16]{KT16}Khanh, Tran Vu; Thu, Ninh Van, Iterates of holomorphic
self-maps on pseudoconvex domains of finite and infinite type in \ensuremath{\mathbb{C}}\textasciicircum{}n.
Proc. Amer. Math. Soc. 144 (2016), no. 12, 5197\textendash 5206.

\bibitem[Ki03]{Ki03}Kirk, William A. Geodesic geometry and fixed
point theory. Seminar of Mathematical Analysis (Malaga/Seville, 2002/2003),
195\textendash 225, Colecc. Abierta, 64, Univ. Sevilla Secr. Publ.,
Seville, 2003.

\bibitem[La74]{La74}Lang, Serge, Higher dimensional diophantine problems.
Bull. Amer. Math. Soc. 80 (1974), 779\textendash 787

\bibitem[La86]{La86}Lang, Serge, Hyperbolic and Diophantine analysis.
Bull. Amer. Math. Soc. (N.S.) 14 (1986), no. 2, 159\textendash 205. 

\bibitem[LN12]{LN12}Lemmens, Bas; Nussbaum, Roger, Nonlinear Perron-Frobenius
theory. Cambridge Tracts in Mathematics, 189. Cambridge University
Press, Cambridge, 2012. xii+323 pp.

\bibitem[L22]{L22}Lemmens, Bas, A metric version of Poincaré's theorem
concerning biholomorphic inequivalence of domains. J. Geom. Anal.
32 (2022), no. 5, Paper No. 160, 20 pp.

\bibitem[LS14]{LS14}Liu, Lixin; Su, Weixu, The horofunction compactification
of the Teichmüller metric. Handbook of Teichmüller theory. Vol. IV,
355\textendash 374, IRMA Lect. Math. Theor. Phys., 19, Eur. Math.
Soc., Zürich, 2014.

\bibitem[Lo99]{Lo99}Lorentzen, Lisa Convergence of compositions of
self-mappings. XII-th Conference on Analytic Functions (Lublin, 1998).
Ann. Univ. Mariae Curie-Sk\l odowska Sect. A 53 (1999), 121\textendash 145. 

\bibitem[Mo96]{Mo96}Mostow, G.D. Remarks delivered at a memorial
event for Lars Ahlfors, Yale University, 1996

\bibitem[Pi16]{Pi16}Pick, G. Über eine Eigenschaft der konformen
Abbildung kreisförmiger Bereiche, Math. Ann. 77 (1916), 1\textendash 6.

\bibitem[V76]{V76}Vesentini, Edoardo Invariant metrics on convex
cones. Ann. Scuola Norm. Sup. Pisa Cl. Sci. (4) 3 (1976), no. 4, 671\textendash 696.

\bibitem[W14]{W14}Walsh, Cormac, The horoboundary and isometry group
of Thurston's Lipschitz metric. Handbook of Teichmüller theory. Vol.
IV, 327\textendash 353, IRMA Lect. Math. Theor. Phys., 19, Eur. Math.
Soc., Zürich, 2014.

\bibitem[X21]{X21}Xia, Mingchen On sharp lower bounds for Calabi-type
functionals and destabilizing properties of gradient flows. Anal.
PDE 14 (2021), no. 6, 1951\textendash 1976.

\bibitem[Z22]{Z22}Zimmer, Andrew, Subelliptic estimates from Gromov
hyperbolicity. Adv. Math. 402 (2022), Paper No. 108334, 94 pp.
\end{thebibliography}
\end{document}